\begin{document}

\newtheorem{definition}{Definition}
\newtheorem{theorem}{Theorem}[section]
\newtheorem{proposition}[theorem]{Proposition}
\newtheorem{lemma}[theorem]{Lemma}
\newtheorem{corollary}[theorem]{Corollary}
\newtheorem{question}[theorem]{Question}
\newtheorem{remark}[theorem]{Remark}
\newtheorem{example}[theorem]{Example}
\newtheorem{conjecture}{Conjecture}

\newtheorem{correction}{Correction}

\def\A{{\mathbb{A}}}
\def\C{{\mathbb{C}}}
\def\F{{\mathbb{F}}}
\def\G{{\mathbb{G}}}

\def\L{{\mathbb{L}}}
\def\P{{\mathbb{P}}}
\def\Q{{\mathbb{Q}}}
\def\Z{{\mathbb{Z}}}
\def\Ch{{\rm Ch}}
\def\p{{\mathbf{p}}}

\def\char{{\rm char}}
\def\mod{{\rm mod}}
\def\sp{{\rm SP}}
\def\supp{{\rm supp}}
\def\rank{{\rm rank}}
\def\Spec{{\rm Spec}}

\def\mC{{\mathcal{C}}}
\def\cZ{{\mathcal{Z}}}
\def\D{{\mathcal{D}}}

\title{On Additive invariants of  actions of additive and multiplicative groups}
\author{Wenchuan Hu}
\date{October 25, 2010}

\keywords{Additive invariants, group actions, virtual Hodge numbers, Chow variety}

\thanks{This material was based upon work supported by
the NSF under agreement No. DMS-0635607.}

\address{
School of Mathematics\\
Sichuan University\\
Chengdu 610064\\
P. R. China
}

\email{huwenchuan@gmail.com}

\begin{abstract}

The additive invariants  of an algebraic variety is calculated in terms of those of the fixed point set under the action
of additive and multiplicative groups, by using  Bia{\l}ynicki-Birula's fixed point formula for a projective algebraic
set with a $\G_m$-action or $\G_a$-action.

The method is also generalized to calculate certain additive invariants for Chow varieties. As applications, we obtain the Hodge
polynomial of Chow varieties in characteristic zero and  the number of points  for Chow varieties over finite fields.

As applications, we obtain the $l$-adic Euler-Poincar\'{e} characteristic for the Chow varieties of certain projective varieties over an
algebraically closed field of arbitrary characteristic. Moreover, we show that the virtual Hodge $(p,0)$ and $(0,q)$-numbers of the
Chow varieties  and affine group varieties are zero for all $p,q$ positive.

\end{abstract}

\maketitle
\pagestyle{myheadings}
 \markright{On Additive invariants}

\tableofcontents

\section{Introduction}
In this paper we generalize a method of Bia{\l}ynicki-Birula (cf. \cite{Bialynicki-Birula2}) in studying the fixed point schemes
under actions of additive and multiplication group schemes and apply it to calculate additive invariants of projective varieties admitting
one of these actions, especially to  affine group varieties and Chow varieties.

Recall that  an additive invariant $\lambda$ on the category $Var_{K}$
of algebraic varieties (a variety means a reduced and irreducible scheme) over a  field $K$ with values in a ring $R$,
is a map
$$
\lambda:Var_{K}\to R
$$
such that
$$\left\{
\begin{array}{lll}
 \lambda(X)=\lambda(X')&\hbox{for $X\cong X'$,}\\
 \lambda(X)=\lambda(Y)+\lambda(X-Y)& \hbox{for $Y$ closed in  $X$,}\\
 \lambda(X\times Y)=\lambda(X)\cdot \lambda(Y)&\hbox{for every $X$ and  $Y$.}
\end{array}\right.
$$

Examples of additive invariants includes the Euler characteristic, the $l$-adic Euler-Poincar\'{e} characteristic,
the Hodge polynomial, counting points, etc. For more examples and details on additive invariants,
the reader is referred to Loeser's lecture \cite{Loeser}.

Our motivation  comes from  the computation of the Euler characteristic of the Chow variety of complex projective spaces by Lawson and
 Yau (cf. \cite{Lawson-Yau}). More precisely, it is from the calculation of the Euler characteristic of
 the complex Chow variety $C_{p,d}(\P^n_{\C})$ (or simply $C_{p,d}(\P^n)$ if there is no confusion) parameterizing effective $p$-cycles of
degree $d$ in the complex projective space $\P^n$. The following formula was shown to hold:

\begin{theorem}[The Lawson-Yau formula] \label{Th01.1} For all $n,p,d\geq 0$, one has
$$
\chi(C_{p,d}(\P^n))=\big(^{v_{p,n}+d-1}_{\quad\quad d}\big),
$$
where $v_{p,n}=(^{n+1}_{p+1})$ and $\chi(M)$ is the Euler  characteristic of $M$.
\end{theorem}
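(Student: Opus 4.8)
The plan is to use the natural torus action on $\P^n$ together with Bia{\l}ynicki-Birula's fixed point formula. Let $T\cong\G_m^{\,n}$ act on $\P^n$ coordinatewise, $t\cdot[x_0:\cdots:x_n]=[t_0x_0:\cdots:t_nx_n]$ (normalizing $t_0=1$). Push-forward of cycles makes this an algebraic action of $T$ on the \emph{projective} variety $C_{p,d}(\P^n)$. First I would reduce to the fixed locus: choosing a flag of subtori $1\subset\G_m\subset\G_m^2\subset\cdots\subset T$ and applying the $\G_m$-fixed point formula at each stage — each intermediate fixed scheme being projective and carrying the residual action of the next factor — one obtains $\chi(C_{p,d}(\P^n))=\chi\big(C_{p,d}(\P^n)^{T}\big)$ (equivalently, invoke the torus form of the formula directly). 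Recall here that since $\chi$ is an additive invariant it is insensitive to nonreduced structure, so only the underlying set of $C_{p,d}(\P^n)^{T}$ matters.

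Next I would describe $C_{p,d}(\P^n)^{T}$ explicitly. A point of it is a $T$-invariant effective $p$-cycle $Z$ of degree $d$; since $T$ is connected it cannot permute the irreducible components of $Z$, so each component of $Z$ is itself a $T$-invariant subvariety of dimension $p$. The key lemma is that the only $T$-invariant irreducible $p$-dimensional subvarieties of $\P^n$ are the coordinate $p$-planes $L_S=\{x_j=0:j\notin S\}$, one for each $S\subseteq\{0,\dots,n\}$ with $|S|=p+1$. This holds because the homogeneous ideal $I(V)\subset K[x_0,\dots,x_n]$ of a $T$-invariant subvariety $V$ is $T$-stable, and $T$ acts on each graded piece of the polynomial ring with pairwise distinct characters (indexed by the monomials); hence $I(V)$ is a monomial ideal, a monomial ideal that is prime is generated by a subset of the variables, and the dimension count forces $|S|=p+1$. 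Therefore $Z=\sum_{|S|=p+1}a_S[L_S]$ with $a_S\in\Z_{\ge0}$ and, since each $L_S$ has degree $1$, $\sum_S a_S=d$.

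Finally comes the counting step. There are exactly $v_{p,n}=\binom{n+1}{p+1}$ index sets $S$, so the number of $T$-invariant cycles of the above form is the number of ways of distributing $d$ units among $v_{p,n}$ bins, namely $\binom{v_{p,n}+d-1}{d}$. Thus $C_{p,d}(\P^n)^{T}$ is a finite set of reduced points of this cardinality, so $\chi\big(C_{p,d}(\P^n)^{T}\big)=\binom{v_{p,n}+d-1}{d}$, and combining with the reduction of the first paragraph completes the proof.

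I expect the main obstacle to be the classification of $T$-invariant $p$-dimensional subvarieties in the second paragraph — in particular, being careful that a \emph{single} $\G_m$ does not suffice: for instance the conic $x_1^2=x_0x_2$ is invariant under $[x_0:x_1:x_2]\mapsto[x_0:tx_1:t^2x_2]$, so the $\G_m$-fixed locus of a Chow variety can acquire positive-dimensional families, and it is precisely the passage to the full torus and the monomial-ideal argument that collapses everything to coordinate planes. A secondary technical point is justifying the passage from the $\G_m$-version of the fixed point formula to the torus $T$ via the flag of subtori (or citing its torus version).
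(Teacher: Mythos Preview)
Your argument is correct, but it follows a different line than the paper's primary proof. The paper does \emph{not} use the full torus at once; instead it works recursively in the ambient dimension. Concretely, one considers the single $\G_m$-action $\Phi_t([z_0:\cdots:z_n:z_{n+1}])=[z_0:\cdots:z_n:tz_{n+1}]$ on $\P^{n+1}$, identifies the invariant irreducible $(p+1)$-cycles as either lying in the hyperplane $\{z_{n+1}=0\}\cong\P^n$ or being a cone $\Sigma_Q V'$ over a $p$-cycle $V'\subset\P^n$ with vertex $Q=[0:\cdots:0:1]$, and obtains
\[
C_{p+1,d}(\P^{n+1})^{\G_m}\;\cong\;\coprod_{i=0}^{d}\; C_{p+1,i}(\P^{n})\times C_{p,d-i}(\P^{n}).
\]
Taking $\chi$ (via the fixed point formula) gives the convolution recursion
\[
\chi(C_{p+1,d}(\P^{n+1}))=\sum_{i=0}^{d}\chi(C_{p+1,i}(\P^{n}))\cdot\chi(C_{p,d-i}(\P^{n})),
\]
which, together with the separately computed base case $\chi(C_{0,d}(\P^n))=\binom{n+d}{d}$, yields the closed form (equivalently the generating function $\sum_d\chi(C_{p,d}(\P^n))t^d=(1-t)^{-v_{p,n}}$).

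Your direct torus approach --- reduce to $C_{p,d}(\P^n)^{T}$, classify $T$-invariant irreducibles as coordinate $p$-planes via the monomial-ideal argument, then count by stars-and-bars --- is cleaner and avoids the recursion entirely; it is essentially Elizondo's method and is what the paper later invokes for general toric varieties and for the spaces $I_{p,d}(\P^n)$. The paper's recursive route, by contrast, is better suited to the relative situation: the same $\G_m$-action on $\P^{n+1}\times X$ immediately produces a recursion expressing $\chi(C_\alpha(\P^{n+1}\times X))$ in terms of Chow varieties of $\P^n\times X$ and of $X$, which your monomial-ideal classification does not directly give.
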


 Lawson and Yau  use a fixed point formula of a weakly holomorphic $S^1$-action in their computation for the Euler
 characteristic of Chow varieties.
 We observe that it would work nicely for other interesting additive invariants once we have corresponding fixed point
 formulas. The basic tool we will use in our proof is a mild generalized version of the following  fixed point formula for $l$-adic
 Euler-Poincar\'{e} characteristic, as proved by Bialynicki-Birula.

Let $X$ be a  projective algebraic subset over a field $K$ with a $\G_m$-action. Note that
$\G_m\cong \Spec K[t,t^{-1}]$.
 That is, there is a morphism
$\phi:\G_m\times X\to X$ such that
$\phi(1,x):=x$  and $\phi(t_1 t_2,x)=\phi(t_1,\phi(t_2, x))$.

\begin{theorem}[Bia{\l}ynicki-Birula, \cite{Bialynicki-Birula2}]\label{Th01.2}
Let $X$ be a  projective algebraic subset over an algebraically closed field $K$ with a $\G_m$-action. Then
$$\chi(X,l)=\chi(F,l),$$
where $F$ is the fixed point set of this action.
\end{theorem}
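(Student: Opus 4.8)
The plan is to derive the formula from the Bia{\l}ynicki-Birula ``plus-decomposition'' of $X$ attached to the $\G_m$-action, combined with the defining properties of an additive invariant, so that the whole statement collapses to one vanishing lemma. Let $F=\bigsqcup_i F_i$ be the decomposition of the closed fixed point set into connected components. Since $X$ is projective, hence complete, the valuative criterion of properness shows that for every point $x\in X$ the orbit morphism $\G_m\to X$, $t\mapsto\phi(t,x)$, extends uniquely to a morphism $\A^1\to X$; write $\lim_{t\to0}\phi(t,x)$ for its value at $0$, which lies in $F$. Put $X_i^+=\{x\in X:\lim_{t\to0}\phi(t,x)\in F_i\}$, so that $X=\bigsqcup_i X_i^+$. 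After choosing an equivariant closed embedding of $X$ into some $\P^N$ carrying a linearized $\G_m$-action and intersecting with the cells of the analogous decomposition of $\P^N$, one sees --- this is the part of Bia{\l}ynicki-Birula's decomposition theorem that requires no smoothness hypothesis --- that each $X_i^+$ is a $\G_m$-invariant locally closed subset, that $F_i$ is closed in $X_i^+$, and that $x\mapsto\lim_{t\to0}\phi(t,x)$ defines a morphism $\gamma_i:X_i^+\to F_i$. Since $\chi(-,l)$ is additive it extends from closed subvarieties to arbitrary constructible subsets, and hence
$$\chi(X,l)-\chi(F,l)=\sum_i\bigl(\chi(X_i^+,l)-\chi(F_i,l)\bigr)=\sum_i\chi(X_i^+\setminus F_i,\,l).$$

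It therefore suffices to prove the following lemma: if a projective algebraic set $W$ carries a $\G_m$-action with empty fixed point set, then $\chi(W,l)=0$. Applying this to $W=X_i^+\setminus F_i$ completes the proof, because a fixed point of such a $W$ would be a fixed point of $X$ lying in $X_i^+$, hence a point of $F_i$, which has been removed. To prove the lemma, observe that a proper closed subgroup scheme of $\G_m$ is finite, so every stabilizer of the action on $W$ is finite. Stratify $W$ by stabilizer type: the locus $W^{\mu_n}$ on which $\mu_n$ acts trivially is closed, and the projection $\{(g,w):gw=w\}\to W$ is quasi-finite over the Noetherian scheme $W$, so only finitely many stabilizer groups occur; this yields a finite partition of $W$ into $\G_m$-invariant constructible pieces $W_n$ on each of which $\G_m/\mu_n\cong\G_m$ acts freely. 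A free $\G_m$-action admits a geometric quotient $\pi:W_n\to W_n/\G_m$ which is a $\G_m$-torsor, and $\G_m$-torsors are Zariski-locally trivial since $\G_m$ is a special group; choosing a trivializing open cover of $W_n/\G_m$ and applying additivity together with $\chi(U\times\G_m,l)=\chi(U,l)\chi(\G_m,l)$ in an inclusion-exclusion argument gives $\chi(W_n,l)=\chi(W_n/\G_m,l)\cdot\chi(\G_m,l)=0$, because $\chi(\G_m,l)=\chi(\A^1,l)-\chi(\mathrm{pt},l)=0$. Summing over the finitely many strata and then over $i$ yields $\chi(X,l)=\chi(F,l)$.

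The two steps that need genuine care are exactly those above. For the decomposition one must justify, for a possibly singular and non-reduced projective $X$, that the $X_i^+$ really are $\G_m$-invariant \emph{locally closed} subsets and that $\gamma_i$ is a morphism; here I would rely on the equivariant projective embedding and the cited results of Bia{\l}ynicki-Birula. For the lemma the delicate inputs are the finiteness of the stabilizer stratification and the existence of the free quotient as a Zariski-locally trivial $\G_m$-torsor. Everything else is formal manipulation with the three axioms of an additive invariant, and the argument is insensitive to $\char K$ since the only numerical facts used are $\chi(\A^1,l)=1$ and $\chi(\G_m,l)=0$.
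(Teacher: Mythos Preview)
Your argument is sound in outline but differs from the paper's route, and it contains one genuine gap and one unnecessary detour.

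The detour first: the Bia{\l}ynicki-Birula plus-decomposition plays no role in the Euler characteristic computation. By additivity you already have $\chi(X,l)-\chi(F,l)=\chi(X\setminus F,l)$, and $X\setminus F$ is a quasi-projective set with a fixed-point-free $\G_m$-action, so your lemma (which, note, you stated for ``projective'' $W$ but apply to the non-projective $X_i^+\setminus F_i$; fortunately your proof of it never uses projectivity) applies directly to $X\setminus F$. The cells $X_i^+$ and the retractions $\gamma_i$ are never used.

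The gap is in the lemma: a free $\G_m$-action on a quasi-projective variety does \emph{not} in general admit a geometric quotient that is a scheme. The standard example is $\G_m$ acting on $\A^2\setminus\{0\}$ by $t\cdot(x,y)=(tx,t^{-1}y)$: the action is free, but the orbits of $(1,0)$ and $(0,1)$ cannot be separated in any separated quotient scheme. What Rosenlicht's theorem gives you is a nonempty $\G_m$-invariant open $U\subset W_n$ on which a geometric quotient exists; to finish you must remove $U$ and recurse by Noetherian induction on the complement.

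Once you make that fix and drop the decomposition, your proof becomes exactly the paper's. The paper (Theorem~\ref{Th2.2}, following Bia{\l}ynicki-Birula) argues directly on $X$: if $X\neq X^{\G_m}$, use Rosenlicht to find an invariant open $U'$ with quotient $U'\to U'/\G_m$; the generic fibre is a homogeneous $\G_m$-space, hence $\G_m/H$ with $H$ finite, hence isomorphic to $\G_m$; shrinking gives an invariant open $U\cong Z\times\G_m$, so $\chi(U,l)=0$ and one recurses on $X\setminus U$. Your stabilizer stratification and the appeal to $\G_m$ being a special group are one route from ``quotient exists'' to ``product with $\G_m$''; the paper shortcuts this by working at the generic point. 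The paper's version also handles $\G_a$ verbatim, while your plus-decomposition framing would not.
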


The first main result in this paper is the following statement.
\begin{theorem}[Corollary \ref{Cor2.4}]\label{Th01.4}
Let $\lambda: Var_K\to R$ be an additive invariant  satisfying $\lambda(\G_m)=0$ (resp. $\lambda(\G_a)=0$). Then
$\lambda(X)=\lambda(X^{\G_m})$ (resp. $\lambda(X)=\lambda(X^{\G_a})$).
\end{theorem}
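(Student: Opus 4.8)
The plan is to reduce the statement to a decomposition of $X$ into locally closed pieces, each of which is either contained in the fixed locus $X^{\G_m}$ or carries a free $\G_m$-action (so it fibers over a quotient with fiber $\G_m$), and then to apply the multiplicativity and additivity axioms. First I would recall Bia{\l}ynicki-Birula's stratification theory: for a projective variety $X$ with a $\G_m$-action, the fixed point set $X^{\G_m}$ is a disjoint union of smooth closed subvarieties $F_1,\dots,F_r$, and $X$ decomposes into locally closed ``plus-cells'' $X_i^+$, each of which retracts $\G_m$-equivariantly onto $F_i$ via an affine-bundle-type map. The key point for an additive invariant is not the bundle structure but the following: $X \setminus X^{\G_m}$ admits a $\G_m$-action that is \emph{set-theoretically free away from the fixed locus}, and one wants to slice it so that the $\G_m$-factor splits off.

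**The main argument.** I would argue by Noetherian induction on $X$. If $X = X^{\G_m}$ there is nothing to prove. Otherwise pick a $\G_m$-stable locally closed subset $U \subseteq X \setminus X^{\G_m}$ on which the action is free and which admits a geometric quotient $U \to U/\G_m$ with $U \cong (U/\G_m) \times \G_m$ as varieties — such a $U$ exists, e.g. take a suitable stratum of the orbit space or invoke generic flatness/Rosenlicht to get a $\G_m$-invariant open dense subset of a stratum on which the quotient is a trivial bundle. Then by the product axiom and the hypothesis $\lambda(\G_m) = 0$,
\[
\lambda(U) = \lambda(U/\G_m)\cdot \lambda(\G_m) = 0.
\]
Now $X \setminus U$ is a $\G_m$-stable closed subset of $X$ with the same fixed locus $X^{\G_m}$, and by additivity $\lambda(X) = \lambda(U) + \lambda(X \setminus U) = \lambda(X \setminus U)$. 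Since $\dim(X\setminus U) < \dim X$ (or it has fewer irreducible components of top dimension), the inductive hypothesis gives $\lambda(X \setminus U) = \lambda\big((X\setminus U)^{\G_m}\big) = \lambda(X^{\G_m})$, and we are done. The $\G_a$-case is identical, using $\G_a \cong \mathbb{A}^1$, the hypothesis $\lambda(\G_a) = 0$, and the fact that a free $\G_a$-action on a variety is (generically, by Rosenlicht) locally trivial with fiber $\G_a$.

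**Where the difficulty lies.** The main obstacle is the geometric input: guaranteeing that $X \setminus X^{\G_m}$ can be exhausted by $\G_m$-stable locally closed subsets on each of which the quotient map is a \emph{trivial} $\G_m$-bundle (not just a torsor), since the product axiom for $\lambda$ only sees honest products $Y \times \G_m$. In characteristic zero one can lean on Bia{\l}ynicki-Birula's result that the plus-cells are Zariski-locally trivial affine bundles, restrict to the complement of the zero-section, and trivialize over an open cover; in positive characteristic, or for the $\G_a$-action (where BB's theory is less clean), one needs Rosenlicht's theorem on rational quotients together with a dévissage argument, passing to a dense open $\G_m$- (resp. $\G_a$-) stable subset where the torsor is trivial and peeling off the lower-dimensional boundary by Noetherian induction. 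A secondary point to handle carefully is that for an \emph{action} of $\G_a$ on a projective variety the fixed locus behaves differently (the only complete orbits are points), so one must be sure the stratification into free locus plus fixed locus is exhaustive — which it is, since any nontrivial $\G_a$-orbit is a copy of $\mathbb{A}^1$ and the action restricted to such an orbit is free. I expect the write-up to cite \cite{Bialynicki-Birula2} for the $\G_m$ stratification and Rosenlicht for the quotient existence, keeping the $\lambda$-computation itself to the two displayed lines above.
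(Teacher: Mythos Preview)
Your core argument---Noetherian induction, peeling off a $G$-invariant open $U\cong Z\times G$ produced by Rosenlicht's theorem so that $\lambda(U)=\lambda(Z)\lambda(G)=0$, and then passing to the smaller $G$-variety $X\setminus U$ with the same fixed locus---is correct and is exactly what the paper does (the paper packages the bookkeeping via an auxiliary notion of ``$A$-equivalence'', but the content is identical). Two small remarks: your opening detour through the Bia\l{}ynicki-Birula plus-cell decomposition is unnecessary and in fact unavailable here, since that theory requires $X$ to be smooth, which is not assumed; and the phrase ``free action on $X\setminus X^{\G_m}$'' is slightly imprecise (stabilizers $\mu_n\subset\G_m$ can occur), though the conclusion you actually need---that the generic orbit is a copy of $G$, hence $U\cong (U/G)\times G$ after shrinking---holds because every quotient $\G_m/H$ with $H\neq\G_m$ (resp.\ $\G_a/H$ with $H\neq\G_a$) is again isomorphic to $\G_m$ (resp.\ $\G_a$), which is precisely the observation the paper invokes.
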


Note that $K$ is not required to be algebraically closed in this theorem.

By applying this to the Chow variety $C_{p,d}(\P^n)_K$ parametrizing  effective $p$-cycles of degree $d$ in the projective
space $\P^n_K$ over an algebraically closed field $K$, we obtain the following result.
\begin{theorem}\label{Th1.4}
Let $\lambda: Var_K\to R$ be an additive invariant  satisfying $\lambda(\G_m)=0$ and $\lambda(\Spec K)=1$. Then
$$
\lambda(C_{p,d}(\P^n)_K)=\big(^{v_{p,n}+d-1}_{\quad\quad d}\big)\in R.
$$
\end{theorem}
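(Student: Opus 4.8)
The plan is to reduce Theorem \ref{Th1.4} to the Lawson--Yau formula (Theorem \ref{Th01.1}) by exploiting Theorem \ref{Th01.4}. First I would observe that $\G_m$ acts on $\P^n_K$ through a generic one-parameter subgroup of the torus $(\G_m)^{n+1}$ acting diagonally on homogeneous coordinates $[x_0:\cdots:x_n]$, say with pairwise distinct weights $w_0,\dots,w_n$. This $\G_m$-action on $\P^n_K$ induces a $\G_m$-action on the Chow variety $C_{p,d}(\P^n)_K$: a torus acting on a projective variety acts on its cycle spaces, since the Chow construction is functorial for linear automorphisms. By Theorem \ref{Th01.4} (applicable because $\lambda(\G_m)=0$, and note $K$ need not be algebraically closed), we get
$$
\lambda\bigl(C_{p,d}(\P^n)_K\bigr)=\lambda\bigl(C_{p,d}(\P^n)_K^{\G_m}\bigr).
$$

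The key step is then to identify the fixed-point set $C_{p,d}(\P^n)_K^{\G_m}$. A $p$-cycle is fixed by $\G_m$ iff each of its irreducible components is a $\G_m$-invariant subvariety of $\P^n$. With distinct weights, the only $\G_m$-invariant irreducible subvarieties of $\P^n$ are the coordinate linear subspaces spanned by subsets of $\{e_0,\dots,e_n\}$ (this is the standard torus-fixed-flat statement). Hence a fixed $p$-cycle of degree $d$ is precisely an unordered sum of $d$ many $p$-planes, each chosen from the finite set of $\binom{n+1}{p+1}=v_{p,n}$ coordinate $p$-planes; equivalently, a degree-$d$ effective $0$-cycle on a set of $v_{p,n}$ reduced points. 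Thus
$$
C_{p,d}(\P^n)_K^{\G_m}\;\cong\;\mathrm{Sym}^d\bigl(v_{p,n}\text{ points}\bigr)=\coprod \;\mathrm{Spec}\,K,
$$
a disjoint union of $\binom{v_{p,n}+d-1}{d}$ reduced points over $K$. Here the combinatorial count of multisets of size $d$ from $v_{p,n}$ objects is exactly the stated binomial coefficient.

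Finally, since $\lambda$ is additive and $\lambda(\mathrm{Spec}\,K)=1$, a disjoint union of $N$ copies of $\mathrm{Spec}\,K$ has $\lambda$-value $N$, giving $\lambda(C_{p,d}(\P^n)_K)=\binom{v_{p,n}+d-1}{d}$ as claimed. I expect the main obstacle to be the scheme-theoretic identification of the fixed locus: one must check that $C_{p,d}(\P^n)^{\G_m}$ is \emph{reduced} and genuinely zero-dimensional — i.e. that the fixed $p$-planes really are isolated and carry no infinitesimal deformations inside the Chow variety — and that distinct multisets of coordinate planes give distinct Chow points (no collapsing of components). One should also verify that the chosen $\G_m\subset(\G_m)^{n+1}$ can be taken defined over $K$ (take integer weights), so that Theorem \ref{Th01.4} applies verbatim over the non-algebraically-closed base. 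Once the fixed locus is known to be a reduced finite scheme over $K$ of the right cardinality, the rest is immediate from the axioms of an additive invariant; alternatively one may cite Theorem \ref{Th01.1} directly if $\lambda$ is specialized, but the argument above works uniformly for all such $\lambda$.
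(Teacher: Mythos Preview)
Your strategy differs from the paper's. The paper does \emph{not} use a one-parameter subgroup with isolated fixed points; it uses the $\G_m$-action on $\P^{n+1}$ with weights $(0,\ldots,0,1)$, whose fixed locus on $C_{p+1,d}(\P^{n+1})$ is identified (Equation~(\ref{eq02})) as $\coprod_{i=0}^d C_{p+1,i}(\P^{n})\times\Sigma_Q C_{p,d-i}(\P^{n})$. Applying Corollary~\ref{Cor2.4} then gives a recursion in $n$ and $p$ which, together with the base case for $0$-cycles, unwinds to the binomial coefficient.

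Your direct approach has a genuine gap in the identification of the fixed locus. The assertion that ``with pairwise distinct weights, the only $\G_m$-invariant irreducible subvarieties of $\P^n$ are the coordinate linear subspaces'' is false: with weights $(0,1,2)$ on $\P^2$ the conic $xz=y^2$ is $\G_m$-invariant and irreducible, so for $d\geq 2$ the fixed set $C_{1,d}(\P^2)^{\G_m}$ strictly contains the multisets of coordinate lines. What is true is that the \emph{full} torus $T=(\G_m)^{n+1}/\G_m$ has exactly the coordinate linear subspaces as its invariant irreducible subvarieties, so $C_{p,d}(\P^n)^{T}$ is the finite set you describe. You can repair the argument in either of two ways: (i) apply Corollary~\ref{Cor2.4} once for each $\G_m$-factor of $T$ to obtain $\lambda(C_{p,d}(\P^n))=\lambda(C_{p,d}(\P^n)^{T})$ (this iterated-torus reduction is exactly what the paper uses in the proofs of Corollary~\ref{Cor3.3} and Theorem~\ref{Th5}); or (ii) invoke the standard fact that for a torus acting on a fixed projective variety $X$ there exists a one-parameter subgroup $\mu:\G_m\to T$ with $X^{\mu(\G_m)}=X^{T}$, but then you must say that the required genericity of the integer weights depends on $(p,d,n)$, not merely that they be pairwise distinct. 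Your worry about reducedness is unnecessary: Corollary~\ref{Cor2.4} concerns the fixed point \emph{set}, and once it is a finite set of $K$-points of the stated cardinality the axioms for $\lambda$ finish the proof.
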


In particular, if $\lambda(-)=\chi(-,l)$ is the $l$-adic Euler-Poincar\'{e} characteristic  and $R=\Z$, then we obtain the
 $l$-adic Euler-Poincar\'{e} characteristic  for $C_{p,d}(\P^n)_K$.

\begin{corollary}\label{cor1.5}
\begin{equation}
\chi(C_{p,d}(\P^n)_K,l)=\big(^{v_{p,n}+d-1}_{\quad\quad d}\big).
\end{equation}
\end{corollary}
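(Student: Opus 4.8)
The plan is to obtain this as an immediate specialization of Theorem \ref{Th1.4}: I would take the additive invariant $\lambda$ to be $\chi(-,l)$, the $l$-adic Euler-Poincar\'{e} characteristic, with values in $R=\Z$, and check that it satisfies the two normalizing hypotheses $\lambda(\G_m)=0$ and $\lambda(\Spec K)=1$.

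First I would recall the relevant formalism. For a separated scheme $X$ of finite type over the algebraically closed field $K$ and a prime $l$ invertible in $K$, one sets $\chi(X,l)=\sum_i (-1)^i \dim_{\Q_l} H^i_c(X,\Q_l)$, the alternating sum of the dimensions of the \'{e}tale cohomology groups with compact support. That this is an additive invariant in the sense of the introduction is standard: invariance under isomorphism is clear; the long exact cohomology sequence attached to a closed subvariety $Y\subset X$ with open complement $U=X-Y$ gives $\chi(X,l)=\chi(Y,l)+\chi(U,l)$; and the K\"{u}nneth formula for cohomology with compact support gives $\chi(X\times Y,l)=\chi(X,l)\cdot\chi(Y,l)$. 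Hence $\chi(-,l)\colon Var_K\to\Z$ is an additive invariant.

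It remains to verify the two normalizations. Since $\Spec K$ is a single point, $H^0_c(\Spec K,\Q_l)=\Q_l$ and the higher groups vanish, so $\chi(\Spec K,l)=1$. For $\G_m$ I would use additivity together with the decomposition $\G_m=\A^1_K-\{0\}$: one has $\chi(\A^1_K,l)=1$ (its compactly supported cohomology is $\Q_l$ in degree $2$ and zero otherwise) and $\chi(\{0\},l)=1$, whence $\chi(\G_m,l)=\chi(\A^1_K,l)-\chi(\{0\},l)=0$; equivalently, $H^1_c(\G_m,\Q_l)\cong\Q_l\cong H^2_c(\G_m,\Q_l)$ and all other groups vanish. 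With both hypotheses confirmed, Theorem \ref{Th1.4} applies with $\lambda=\chi(-,l)$ and $R=\Z$, yielding $\chi(C_{p,d}(\P^n)_K,l)=\binom{v_{p,n}+d-1}{d}$. The only point requiring genuine attention — rather than being purely formal — is to make sure the standing hypotheses of Theorem \ref{Th1.4} are in force, namely that $K$ is algebraically closed (needed for the Bia{\l}ynicki-Birula fixed-point input underlying that theorem) and that $l\neq\char K$ (needed for the $l$-adic formalism to behave well); granting these, there is no further obstacle and the corollary is immediate.
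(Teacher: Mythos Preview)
Your deduction is correct, and it is exactly how the corollary is positioned in the introduction: as the specialization $\lambda=\chi(-,l)$ of Theorem \ref{Th1.4}. The verification that $\chi(-,l)$ is an additive invariant with $\chi(\Spec K,l)=1$ and $\chi(\G_m,l)=0$ is accurate and matches what the paper notes in Section 3.1.

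That said, the paper's explicit proof of Corollary \ref{cor1.5} (in Section 4.1) does \emph{not} go through Theorem \ref{Th1.4}. It argues directly: it sets up the $\G_m$-action $\Phi_t([z_0,\dots,z_{n+1}])=[z_0,\dots,z_n,tz_{n+1}]$ on $\P^{n+1}$, identifies the fixed locus of the induced action on $C_{p+1,d}(\P^{n+1})_K$ as $\coprod_{i=0}^d C_{p+1,i}(\P^n)_K\times\Sigma_Q C_{p,d-i}(\P^n)_K$ (Equation (\ref{eq02})), applies Bia{\l}ynicki-Birula's Theorem \ref{Th01.2} to get the recursion in $n$ and $p$, and closes the induction with the $p=0$ case via Equation (\ref{eqn06}). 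The point of this ordering is that the geometric identities (\ref{eq02}) and (\ref{eqn06}) are then \emph{reused} in the paper's proof of Theorem \ref{Th1.4}; in the paper's actual logical development Corollary \ref{cor1.5} is proved \emph{before} Theorem \ref{Th1.4}, not deduced from it. Your shortcut is therefore valid only once Theorem \ref{Th1.4} has been independently established --- which it can be, since (\ref{eq02}) and (\ref{eqn06}) are purely geometric and make no reference to $\chi(-,l)$ --- but as written it reverses the paper's dependency and would look circular inside the paper's own exposition. What your route buys is brevity; what the paper's direct route buys is the explicit fixed-point computation that powers the general theorem as well.
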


In particular, we obtain the following results on the virtual Hodge numbers and the virtual Betti numbers of the
Chow variety $C_{m,d}(\P^n_K)$ parameterizing  algebraic $m$-cycles of degree $d$ in $\P^n_K$.
\begin{theorem} Let $K$ be an algebraically closed subfield of $\C$. For integers $n\geq m\geq 0$ and $d\geq 0$,
the  virtual Hodge $(p,0)$ and $(0,q)$-numbers
of the Chow variety $C_{m,d}(\P^n_K)$ are zero for all integers $p, q>0$. Moreover, the virtual Hodge $(p,q)$-numbers
 $\tilde{h}^{p,q}(C_{m,d}(\P^n_K))$ of $C_{m,d}(\P^n_K)$ satisfies the following equation:
$$
\sum_{p-q=i}\tilde{h}^{p,q}(C_{m,d}(\P^n_K))=0
$$
for all $i\neq 0$ and $$
\sum_{p\geq0}\tilde{h}^{p,p}(C_{m,d}(\P^n_K))=\chi(C_{m,d}(\P^n_K)).
$$
\end{theorem}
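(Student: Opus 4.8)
The plan is to feed a few well-chosen specializations of the Deligne--Hodge ($E$-)polynomial into the results already proved above, rather than re-running any fixed-point analysis by hand. Recall that $E(X;u,v)\in\Z[u,v]$ --- defined over $\C$ from the mixed Hodge structures on $H^\bullet_c(X;\Q)$, and extended to $K$-varieties ($K\subseteq\C$) by base change $X\mapsto X\times_K\C$, which in characteristic zero carries $C_{m,d}(\P^n_K)$ to $C_{m,d}(\P^n_\C)$ --- is an additive and multiplicative invariant with $E(\Spec K)=1$, $E(\G_a)=E(\A^1)=uv$, $E(\G_m)=uv-1$, and whose coefficients are the virtual Hodge numbers: $E(X;u,v)=\sum_{p,q}(-1)^{p+q}\tilde h^{p,q}(X)\,u^pv^q$. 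So it suffices to evaluate this polynomial on $C_{m,d}(\P^n_K)$ along the three substitutions $v=u^{-1}$, $v=0$, and $u=0$, each of which produces an additive invariant in the sense of the paper (a ring-homomorphic image of $E$).

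For the two ``diagonal'' assertions I would apply Theorem~\ref{Th1.4} to the additive invariant $\lambda(X):=E(X;u,u^{-1})\in\Z[u,u^{-1}]$, which is scissor-additive, multiplicative, and satisfies $\lambda(\G_m)=u\cdot u^{-1}-1=0$ and $\lambda(\Spec K)=1$. It gives $E(C_{m,d}(\P^n_K);u,u^{-1})=\binom{v_{m,n}+d-1}{d}$. Since $p+q\equiv p-q\pmod 2$, the coefficient of $u^i$ on the left is $(-1)^i\sum_{p-q=i}\tilde h^{p,q}(C_{m,d}(\P^n_K))$; comparing with the right-hand constant yields $\sum_{p-q=i}\tilde h^{p,q}=0$ for $i\ne 0$ and $\sum_{p}\tilde h^{p,p}=\binom{v_{m,n}+d-1}{d}$. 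The Lawson--Yau formula in the form of Corollary~\ref{cor1.5} (or Theorem~\ref{Th1.4} again, now for $\lambda=\chi$) identifies the last quantity with $\chi(C_{m,d}(\P^n_K))$, giving the second displayed equation.

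For the vanishing of $\tilde h^{p,0}$ with $p>0$ I would use the $\G_a$-half of Theorem~\ref{Th01.4} together with $\mu(X):=E(X;u,0)\in\Z[u]$, for which $\mu(\G_a)=uv|_{v=0}=0$. Fix a Borel $B=TU\subset PGL_{n+1}$ stabilizing the standard flag and a composition series $\{1\}=H_0\lhd H_1\lhd\cdots\lhd H_N=U$ with $H_{i+1}/H_i\cong\G_a$; since $H_{i+1}/H_i$ acts on the projective scheme $C_{m,d}(\P^n_K)^{H_i}$ with fixed locus $C_{m,d}(\P^n_K)^{H_{i+1}}$, applying Theorem~\ref{Th01.4} at each stage gives $\mu(C_{m,d}(\P^n_K))=\mu(C_{m,d}(\P^n_K)^{U})$. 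Now an irreducible $U$-invariant (equivalently $B$-invariant) $m$-dimensional subvariety of $\P^n$ must be the flag member $\P\langle e_0,\dots,e_m\rangle$ --- the $B$-orbit closures in $\P^n$ being exactly the linear Schubert subvarieties, and each $B$-orbit being a single $U$-orbit --- so, as the connected group $U$ fixes every component of an invariant cycle, $C_{m,d}(\P^n_K)^{U}=\{\,d\cdot[\P\langle e_0,\dots,e_m\rangle]\,\}$ is one reduced point and $\mu(C_{m,d}(\P^n_K))=1$. Thus $\sum_{p\ge0}(-1)^p\tilde h^{p,0}u^p=E(C_{m,d}(\P^n_K);u,0)=1$, forcing $\tilde h^{p,0}=0$ for all $p>0$ (and, incidentally, $\tilde h^{0,0}=1$). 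Running the identical argument with the opposite unipotent $U^-$ and the invariant $E(X;0,v)$ gives $\tilde h^{0,q}=0$ for all $q>0$, completing the three claims.

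The one genuinely geometric input, and the step I expect to require the most care, is the description of the fixed loci: that $C_{m,d}(\P^n_K)^{T}$ consists of $\binom{v_{m,n}+d-1}{d}$ reduced points (the monomial cycles, i.e.\ nonnegative combinations of coordinate $m$-planes), which is what powers Theorem~\ref{Th1.4} and the Lawson--Yau count, and that $C_{m,d}(\P^n_K)^{U}$ is a single reduced point. Both follow from the classification of torus- and Borel-invariant irreducible subvarieties of $\P^n$ --- coordinate linear subspaces, respectively members of the coordinate flag --- applied componentwise to effective cycles; one should still check reducedness of these fixed schemes, or simply note that $E$ is insensitive to nilpotents, so that only the reduced structure and the number of components matter. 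Everything else (that the three substitutions yield legitimate additive invariants, and that Chow varieties commute with field extension in characteristic zero) is routine.
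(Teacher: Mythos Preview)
Your proposal is correct and follows essentially the same approach as the paper: for the diagonal sums you specialize the Hodge polynomial along $v=u^{-1}$ (the paper passes to $\Z[u,v]/\langle uv-1\rangle$) and invoke Theorem~\ref{Th1.4}, and for the $(p,0)$ and $(0,q)$ vanishing you use the $\G_a$-half of Corollary~\ref{Cor2.4} together with the unipotent action having a single fixed point, which is exactly the paper's route via Corollary~\ref{Cor3.4} and the Horrocks reference. Your composition-series reduction and explicit identification of $C_{m,d}(\P^n)^U$ in fact fill in details the paper leaves to the citation; the separate pass with $U^-$ is harmless but unnecessary, since the same $U$ already forces $E(C_{m,d}(\P^n);0,v)=1$.
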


The method in proving Theorem \ref{Th1.4} is applied to obtain additive invariants for the Chow varieties of general
 toric varieties as well as the Chow varieties parametrizing irreducible cycles in the  product of arbitrary many projective spaces.

\emph{Acknowledgement}. I would like to thank professor V. Voevodsky for  his interesting and
helpful comments on an earlier version of the paper.


\section{A generalization of Bia{\l}ynicki-Birula's method to additive invariants}

Let $A$ be a fixed algebraic variety  over a field $K$ of arbitrary characteristic.
 An algebraic scheme $Y$ is said to be simply equivalent to an algebraic
scheme $X$ if $Y$ is isomorphic to a closed subscheme $X'$ of $X$ and there is  an isomorphism $f:X-X'\to Z\times A$ for some
algebraic scheme $Z$. The smallest equivalence relation containing the relation of  simple $A$-equivalence is called the
$A$-equivalence.

\begin{lemma}
Let $X$, $Y$ and $A$ be algebraic varieties over $K$ and let $\lambda:Var_{K}\to R$ be an additive invariant. Suppose that $X$
is $A$-equivalent to $Y$. If $\lambda(A)=0\in R$, then $\lambda(X)=\lambda(Y)\in R$.

\end{lemma}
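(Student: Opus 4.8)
The plan is to reduce the statement to the defining relation of \emph{simple} $A$-equivalence, where the three axioms of an additive invariant close the argument in a single line, and then to propagate this through the equivalence relation it generates.

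First I would make sense of $\lambda$ on the schemes that occur in the definition. Although $\lambda$ is only postulated on the category of varieties, the three axioms force a canonical extension to all algebraic $K$-schemes: from the scissor axiom applied with $X=Y$ one gets $\lambda(\emptyset)=0$, hence $\lambda(X)=\lambda(X_{\mathrm{red}})$; and for a reduced reducible scheme one stratifies into finitely many locally closed subvarieties and sets $\lambda(X)=\sum_j\lambda(V_j)$, the scissor and product axioms guaranteeing independence of the chosen stratification and preservation of additivity and multiplicativity. (Equivalently, $\lambda$ factors through the Grothendieck ring $K_0(\mathrm{Var}_K)$, which one identifies with $K_0(\mathrm{Sch}_K)$.) With this in place every term below is defined.

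Next I would dispose of the case in which $Y$ is simply $A$-equivalent to $X$. By definition there is a closed subscheme $X'\subseteq X$ with $Y\cong X'$ and an isomorphism $X\setminus X'\cong Z\times A$ for some algebraic scheme $Z$. Then the closed-subscheme axiom gives $\lambda(X)=\lambda(X')+\lambda(X\setminus X')$, the isomorphism axiom gives $\lambda(X')=\lambda(Y)$, and the product axiom together with $\lambda(A)=0$ gives $\lambda(X\setminus X')=\lambda(Z\times A)=\lambda(Z)\lambda(A)=0$. Hence $\lambda(X)=\lambda(Y)$; in particular $\lambda$ takes equal values on any two schemes joined by one step of simple $A$-equivalence, in either direction. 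Since $A$-equivalence is by definition the equivalence relation generated by simple $A$-equivalence, $X$ being $A$-equivalent to $Y$ means there is a finite chain $X=X_0,X_1,\dots,X_n=Y$ with each consecutive pair related by a single such step; chaining the equalities just obtained gives $\lambda(X)=\lambda(X_0)=\cdots=\lambda(X_n)=\lambda(Y)$.

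The arithmetic is essentially free once the axioms are available, so the only point requiring real attention is the first step: verifying that $\lambda$ extends unambiguously to the possibly non-reduced, possibly reducible schemes $X'$, $Z$ and $X\setminus X'$ that appear in the definition of simple $A$-equivalence. If one prefers to sidestep this, an alternative is to rephrase the whole discussion inside the Grothendieck ring $K_0(\mathrm{Var}_K)$ from the outset, where $[X\setminus X']=[Z]\cdot[A]$ is immediate and $\lambda$ is just a ring homomorphism; no further obstacle arises.
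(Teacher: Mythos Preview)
Your proof is correct and follows essentially the same path as the paper's own argument: reduce to the case of simple $A$-equivalence and then apply the three axioms in a single line, $\lambda(X)=\lambda(X')+\lambda(X\setminus X')=\lambda(Y)+\lambda(Z)\lambda(A)=\lambda(Y)$. You are in fact more careful than the paper about extending $\lambda$ from varieties to the possibly reducible or non-reduced schemes $X'$, $Z$, $X\setminus X'$ that occur in the definition of simple $A$-equivalence---a point the paper passes over in silence---and about explicitly chaining through the generated equivalence relation.
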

\begin{proof}
It is enough to consider the case that $X$ is simply $A$-equivalent to $Y$ since $\lambda$ is an additive invariant.
By definition, there is an open quasi-projective
scheme $U$ of $X$ such that $X-U$ is isomorphic to $Y$ and an isomorphism $f:U\to U'\times A$, where $U'$ is an algebraic scheme.
 In this case, we have $ \lambda(X)= \lambda(Y)+ \lambda(U)= \lambda(Y)+ \lambda(U') \lambda(A)=\lambda(Y)$.
This completes the proof of the lemma.
\end{proof}

\begin{theorem}\label{Th2.2}
Let $G=\G_m$  and suppose that $G$ acts on a reduced and  irreducible algebraic scheme $X$. Then $X$ is $A$-equivalent to
$X^G$, where $X^G$ denotes the fixed point set of the $G$-action and $A=\Spec (K[x,x^{-1}])$.
Likewise, if $X$ admits the action of  the additive group $G=\G_a$  and $A=\Spec (K[x])$, then $X$ is $A$-equivalent to
$X^G$.
\end{theorem}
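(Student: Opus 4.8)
The plan is to reduce everything to the classical Bia{\l}ynicki-Birula decomposition and then read off the $A$-equivalence directly from the strata. First I would recall that for $G=\G_m$ acting on the reduced irreducible projective (or quasi-projective) scheme $X$, the fixed locus $X^G$ decomposes into connected components $F_1,\dots,F_r$, and there is a locally closed stratification $X=\bigsqcup_i X_i^{+}$ where $X_i^{+}=\{x\in X:\lim_{t\to 0}t\cdot x\in F_i\}$ is the attracting set of $F_i$. Bia{\l}ynicki-Birula's theorem (the scheme-theoretic version in \cite{Bialynicki-Birula2}) gives that each $X_i^{+}$ is a locally trivial affine-space bundle over $F_i$ with fibre $\A^{n_i}$ for some $n_i\ge 0$; in particular there is a $G$-equivariant isomorphism $X_i^{+}\cong F_i\times\A^{n_i}$ after suitably stratifying $F_i$ (étale-locally, and since we only need this up to the $A$-equivalence relation which already allows cutting $F_i$ into pieces, a Zariski-local trivialization on a stratification of $F_i$ suffices).

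The second step is to organize these strata into a filtration and peel them off one at a time. Order the components so that $F_1$ is "most attracting," i.e. $X_1^{+}$ is open in $X$; then $X\setminus X_1^{+}$ is a closed reduced subscheme, again with a $G$-action whose fixed locus is $X^G\setminus F_1=F_2\sqcup\cdots\sqcup F_r$, and I would induct on $r$. For the base case where the stratum $X_i^{+}$ with $n_i=0$, the stratum simply is $F_i$ and there is nothing to remove. For $n_i>0$, I would write $\A^{n_i}=\Spec K[x_1,\dots,x_{n_i}]$ and observe that $\A^{n_i}\setminus\{0\}$ — or more usefully, one can peel $\A^{n_i}$ itself off in one move since $F_i\times\A^{n_i}$ maps to $Z\times A$ with $A=\A^1=\Spec K[x]$ only after noting $\A^{n_i}\cong\A^{n_i-1}\times\A^1$; iterating $n_i$ times expresses the "excess" of $X_i^{+}$ over $F_i$ as a disjoint union of pieces each isomorphic to something times $\A^1$. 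Concretely: $X_i^{+}\setminus F_i\cong F_i\times(\A^{n_i}\setminus\{0\})$, and $\A^{n_i}\setminus\{0\}$ is covered by the standard opens $x_j\ne 0$, each isomorphic to $\G_m\times\A^{n_i-1}$; but since here $A=\A^1$ for the $\G_a$-case and $A=\G_m$ for the $\G_m$-case, I need to be a little careful and match the group to the affine factor. For $G=\G_m$ the cleaner route is: $X_i^{+}\cong F_i\times\A^{n_i}$ and $\A^{n_i}\setminus\{0\}$ stratifies into pieces isomorphic to $\G_m\times\A^{k}$, each of which is $(Z\times\G_m)$ for $Z=\A^k\times(\text{lower piece})$; so successively removing $F_i$ from each $X_i^{+}$ realizes $X$ as $A$-equivalent to $X^G$ with $A=\G_m=\Spec K[x,x^{-1}]$.

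For the $\G_a$ case I would run the analogous argument using Bia{\l}ynicki-Birula's extension to $\G_a$-actions (or the fact that a $\G_a$-action on a complete variety has the same formal structure: every orbit is either a point or an affine line $\A^1$, and the orbit space near the fixed locus is again an affine bundle). The decomposition $X=\bigsqcup(\text{fixed part})\sqcup(\text{union of }\A^1\text{-orbits})$ lets one fiber the non-fixed locus, stratum by stratum, as $Z\times\A^1$ with $A=\A^1=\Spec K[x]$, which is exactly the required simple $\A^1$-equivalence building block.

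The main obstacle I anticipate is the triviality of the attracting bundles: Bia{\l}ynicki-Birula proves $X_i^{+}\to F_i$ is a Zariski-locally-trivial $\A^{n_i}$-bundle when $X$ is smooth, but here $X$ is only assumed reduced and irreducible, so over the singular locus of $X^G$ (or where the $G$-action has varying weights) one only gets this after a further stratification of $F_i$ — and one must check that the relevant statement still holds in arbitrary characteristic, which is where the explicit reference to \cite{Bialynicki-Birula2} is doing real work. The bookkeeping of which standard open of $\A^{n_i}\setminus\{0\}$ contributes which $Z\times A$ factor is routine once the bundle structure is in hand, so I expect the proof to consist almost entirely of citing the fixed-point decomposition correctly and then an easy induction on the number of strata.
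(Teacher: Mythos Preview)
Your approach leans on the Bia{\l}ynicki-Birula decomposition, but the hypotheses of the theorem do not support it. The scheme $X$ is only assumed reduced and irreducible, not complete, so the limits $\lim_{t\to 0}t\cdot x$ defining the attracting sets $X_i^{+}$ need not exist at all; and $X$ is not assumed smooth, so even when the strata exist they are not in general affine-space bundles over the fixed components --- the hoped-for fix of ``further stratifying $F_i$'' does not in general recover a product structure $F_i'\times\A^{n_i}$ in the singular case. For $G=\G_a$ the situation is worse: there is no limit-based decomposition analogous to Bia{\l}ynicki-Birula's, and your sketch that the non-fixed locus ``fibres stratum by stratum as $Z\times\A^1$'' is essentially the statement you are trying to prove.

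The paper's argument is both more elementary and more general, and treats $\G_m$ and $\G_a$ uniformly. Rather than any global decomposition, it works generically: on some nonempty $G$-invariant open $U'\subset X$ a geometric quotient $\phi:U'\to U'/G$ exists, and its generic fibre is a homogeneous $G$-space over the field $K(U')^G$, hence of the form $G/H$. For $G=\G_m$ or $\G_a$ every positive-dimensional quotient $G/H$ is isomorphic to $G$ itself, so $X$ is $G$-birational to a product $U_1'\times G$; consequently $X$ contains a nonempty $G$-invariant open of the form $U_1\times G$. Removing this open is exactly a simple $A$-equivalence with $A=G$. One then replaces $X$ by the closed complement and repeats; noetherian induction terminates the process at a closed subscheme on which $G$ acts trivially, which must be $X^G$. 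No smoothness, completeness, or attracting-set structure is needed.
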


\begin{proof}In the following $G$ is either $\G_a$ or $\G_m$.
The case is clear if  $X= X^G$. Suppose that $X\neq X^G$.  We will show that  there exists a $G$-invariant  non-empty open
subscheme $U$ of  $X$ isomorphic to $Z\times A$, for some scheme $Z$. To see this, let $U'$ be a non-empty open
irreducible subscheme of $X$ such that  the quotient  $\phi:U'\to U'/G$ exists. The te generic fiber $F$ of $\phi$ is an
algebraic scheme over the field of rational functions $K(U'/G)=K(U')^G$. Moreover,  the fiber $F$ with the action of $G$
is homogeneous. Hence there exists a $K(U'/G)$-rational point in $F$ and $F$ is isomorphic to $G/H$ for some algebraic group
subscheme over $K(U')^G$ of $G$, where the action of $G$ on $G/H$ is induced by translations. By our assumption, the group
scheme $G/H$ is isomorphic to $G$. Hence $X$is birationally $G$-equivalent to some product $U_1'\times G$. Therefore,
$X$ contains an open $G$-invariant subscheme  $U$ which is isomorphic to non-empty open subscheme of $U_1'\times G$.
 Note that a $G$-invariant open subscheme of $U_1'\times G$ is of the form $U_1\times G$ for some open subscheme $U_1$ of $U_1'$. Thus $X$
is $A$-equivalent to $X-U$.  If $X-U=(X-U)^G$, then $X^G=X-U$ and so $X$ is $A$-equivalent to $X^G$.
Otherwise, $X-U\neq (X-U)^G$ then we repeat the above step where $X$ is replaced by $X-U$. Since $X$ is noetherian,
 we obtain a closed subscheme $X_0$ of $X$ such that $X_0=X_0^G$ and $X_0$ is $A$-equivalent to $X$.
 From the construction of $X_0$, we see that $X_0^G=X^G$.
Therefore, $X^G$ is $A$-equivalent to $X$.
\end{proof}

\begin{remark}
The proof above follows from  Bia{\l}ynicki-Birula's argument, where the base field he considered is an algebraically closed field.
However, the proof works for an arbitrary field.
\end{remark}

\begin{corollary}\label{Cor2.4}
Let $\lambda: Var_K\to R$ be an additive invariant  satisfying $\lambda(\G_m)=0$ (resp. $\lambda(\G_a)=0$). Then
$\lambda(X)=\lambda(X^{\G_m})$ (resp. $\lambda(X)=\lambda(X^{\G_a})$).
\end{corollary}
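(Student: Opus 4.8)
The plan is to deduce Corollary \ref{Cor2.4} directly from Theorem \ref{Th2.2} together with the lemma on $A$-equivalence proved just before it. Let $\lambda : Var_K \to R$ be an additive invariant, and suppose first that $\lambda(\G_m) = 0$; let $X$ be any variety in $Var_K$ carrying a $\G_m$-action. By Theorem \ref{Th2.2}, $X$ is $A$-equivalent to its fixed locus $X^{\G_m}$, where $A = \Spec(K[x,x^{-1}]) \cong \G_m$. Since $\lambda(A) = \lambda(\G_m) = 0$ by hypothesis, the lemma on $A$-equivalence (with $X$, $Y = X^{\G_m}$, and $A = \G_m$) applies and yields $\lambda(X) = \lambda(X^{\G_m})$. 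The argument for $\G_a$ is verbatim the same, with $A = \Spec(K[x]) \cong \G_a$ and the hypothesis $\lambda(\G_a) = 0$.

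One technical point deserves a sentence: the lemma as stated requires $A$ (and $X$, $Y$) to be a variety, i.e. reduced and irreducible, and Theorem \ref{Th2.2} is likewise stated for reduced irreducible $X$. Both $\G_m$ and $\G_a$ are indeed reduced and irreducible, so $A$ qualifies. If one wants the corollary for an arbitrary (possibly reducible or non-reduced) algebraic scheme $X$, one first reduces to the variety case by stratifying $X$ into locally closed reduced irreducible $\G_m$-invariant (resp. $\G_a$-invariant) subschemes and using additivity of $\lambda$; but as stated in the excerpt the corollary is for $X \in Var_K$, so no such reduction is needed and the two-line deduction above suffices.

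There is essentially no obstacle here: the corollary is a formal consequence of the two preceding results, and all the real work — producing the $A$-invariant open subscheme isomorphic to a product $Z \times A$, and running the noetherian induction down to the fixed locus — has already been carried out in the proof of Theorem \ref{Th2.2}. The only thing to be careful about is bookkeeping the identification $A \cong \G_m$ (resp. $A \cong \G_a$) so that the hypothesis $\lambda(\G_m) = 0$ (resp. $\lambda(\G_a) = 0$) is exactly what the $A$-equivalence lemma consumes.

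Finally, I would remark, as the excerpt already notes after the statement, that $K$ need not be algebraically closed: neither Theorem \ref{Th2.2} nor the $A$-equivalence lemma used its ground field to be algebraically closed, so the corollary inherits that generality. This is the point that makes Corollary \ref{Cor2.4} a genuine strengthening of Bia{\l}ynicki-Birula's Theorem \ref{Th01.2} rather than a mere repackaging of it.
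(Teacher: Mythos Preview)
Your proposal is correct and matches the paper's intended argument: the paper gives no explicit proof of Corollary \ref{Cor2.4}, treating it as an immediate consequence of Theorem \ref{Th2.2} together with the preceding $A$-equivalence lemma, which is precisely the deduction you spell out. Your additional remarks on irreducibility and on the ground field not needing to be algebraically closed are accurate and consistent with the paper's own Remark following Theorem \ref{Th2.2}.
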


For additive invariants $\lambda$ defined on $Var_K$ to be interesting,
 we require that $\lambda(\Spec(K))=1$. In fact, it follows from
the definition of additive invariants that $\lambda(\Spec(K))$ is either $0$ or $1$. Moreover, if $\lambda(\Spec(K))=0$, then
it follows from the definition that $\lambda\equiv 0$. So  we only consider  non-trivial  additive invariants $\lambda$ below, i.e.,
 $\lambda(\Spec(K))=1$.

\begin{example} Let $\lambda:Var_K\to \Z$ be an additive invariant such that  $\lambda(\G_m)=0$.
Then for  $X=X(\Delta)$ a (possible singular) toric variety associated to a fan $\Delta$, we have $\lambda(X)=d_n(\Delta)$, where
$d_n(\Delta)$ is the number of $n$-dimensional cones in $\Delta$ and $n$ is the dimension of $X$.
\end{example}
\begin{proof}
 For $X=X(\Delta)$ an arbitrary toric variety, we write $X$ as the disjoint union of its orbits
$O_{\tau}$ under $\G_m^{\times n}$. Each orbit $O_{\tau}$ is isomorphic to $(\G_m)^{\times i}$. By assumption, $\lambda(\G_m)=0$.
This implies that
$\lambda((\G_m)^{\times i})=0$ for $i>0$. The number of $0$-dimensional orbits is exactly the number of   $n$-dimensional cones in
$\Delta$, i.e., $d_n(\Delta)$.
\end{proof}

Note that the Euler characteristic of $X(\Delta)$ is also $d_n(\Delta)$ (cf. \cite[Ch.3]{Fulton2}). This is not surprising
since the Euler characteristic $\chi$ is an additive invariant satisfying $\chi(\G_m)=0$ (see the next section).

If a variety $X$ admits a $\G_m$-action with isolated fixed points, then for any additive invariant $\lambda:Var_K\to \Z$
with $\lambda(\G_m)=0$,  $\lambda(X)$ coincides with the cardinality of the fixed point set. In particular, the fixed point set of
$\G_m$-action on an algebraic torus can not have isolated fixed points.

\medskip
A variety is called \textbf{cellular} if there is a filtration
$\emptyset=Y_{-1}\subset Y_0\subset Y_1\subset\cdots\subset Y_N=Y$
such that $Y_i-Y_{i-1}$ is isomorphic to $\C^{  \mu_i} $ for all $i$ (where $0=  \mu_0\leq  \mu_1\leq \cdots$).
\begin{example}
Let $\lambda:Var_K\to \Z$ be an additive invariant such that $\lambda(\G_m)=0$. Then  for
a cellular variety $Y$ as above, one has $\lambda(Y)=N$.
\end{example}

\begin{example}(cf. \cite[Cor.5]{Bialynicki-Birula2})
 Let $\lambda:Var_K\to \Z$ be an additive invariant such that $\lambda(\G_m)=0$. For an algebraic
connected reduced affine group scheme $G$, one has $\lambda(G)=0$ or $1$. Moreover, $\lambda(G)=1$ if and only if
$G$ is unipotent.
\end{example}
\begin{proof}
If $G$ is not unipotent , then it contains a subgroup isomorphic to $\G_m$. The action of $\G_m\cong H$  by left translations of $G$ has
no fixed point. By Corollary \ref{Cor2.4}, we have $\lambda(G)=0$. If $G$ is unipotent, then $G$ is isomorphic to $K^n$. Hence $\lambda(G)=1$.
\end{proof}

\section{Examples of  additive invariants}

\subsection{Euler characteristic}

When $K$ is a subfield of $\C$, the  Euler characteristic is given by
$$
\chi(X):=\sum_{i}(-1)^i \rank H^i(X(\C),\C).
$$

For more general $K$ and  a variety $X$ over $K$,
let $H^i(X,\Z_l)$ be the $l$-adic cohomology group of $X$, where  $l$ is a positive integer prime to the characteristic $\char(K)$ of $K$.
Set $H^i(X,\Q_l):=H^i(X,\Z_l)\otimes_{\Z_l} \Q_l$. Denote by $\beta^i(X,l):=\dim_{\Q_l}H^i(X,\Q_l)$
the  $i$-th $l$-adic Betti number of $X$. The $l$-adic Euler characteristic is defined by
$$\chi(X,l):=\sum_i (-1)^i\beta^i(X,l).$$

Similarly, let $H_c^i(X,\Z_l)$ be the $l$-adic cohomology group
 of $X$ with compact support. Set $\beta^i_c(X,l):=\dim_{\Q_l}H_c^i(X,\Q_l)$
 the $i$-th $l$-adic Betti number of $X$ with compact support and
$$\chi_c(X,l):=\sum_i (-1)^i\beta_c^i(X,l)$$  the $l$-adic Euler-Poincar\'{e} characteristic
with compact support. Note that $\chi_c(X,l)$ is independent of the
choice of $l$ prime to $\char(K)$ (See, e.g., \cite{Katz} or \cite{Illusie}).

Those $\chi, \chi_c, \chi(-,l)$ and $\chi_c(-,l)$ are  additive invariants from $Var_k$ to $\Z$,
which follows from the fact that $\chi=\chi_c$  and $\chi(-,l)=\chi_c(-,l)$
(cf. \cite{Fulton2} for the case over $\C$, \cite{Laumon} for general cases).

From Corollary \ref{Cor2.4} and note that both $\chi(\G_m)$ (in the case that $k$ is a subfield of $\C$) and
$\chi(\G_m,l)$ are zero. So one gets Bia{\l}ynicki-Birula's result.
\begin{corollary}[\cite{Bialynicki-Birula2}]
 Suppose that $X$ admits a $\G_m$-action with the fixed point set $X^{\G_m}$. Then we have
\begin{enumerate}
\item  $\chi(X)=\chi(X^{\G_m})$ if $K$  is a subfield of $\C$.

\item $\chi(X,l)=\chi(X^{\G_m},l)$ if $\char(K)$ is positive.

\end{enumerate}
\end{corollary}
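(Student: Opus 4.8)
The plan is to deduce this corollary directly from Corollary \ref{Cor2.4} together with the two standard facts about the invariants $\chi$ and $\chi(-,l)$: that each is additive from $Var_K$ to $\Z$, and that each vanishes on $\G_m$. For part (1), when $K$ is a subfield of $\C$, the Euler characteristic $\chi$ is an additive invariant (via $\chi = \chi_c$), and $\chi(\G_m) = \chi(\C^\times) = 0$ since $\C^\times$ is homotopy equivalent to $S^1$. For part (2), when $\char(K)$ is positive, $\chi(-,l) = \chi_c(-,l)$ is an additive invariant to $\Z$, and $\chi(\G_m,l) = \chi_c(\G_m,l) = 0$ because $\G_m = \Spec K[t,t^{-1}]$ and one computes $H_c^1(\G_m,\Q_l) \cong \Q_l$, $H_c^2(\G_m,\Q_l) \cong \Q_l$ with all other compactly supported cohomology vanishing, so the alternating sum is $-1 + 1 = 0$; equivalently $\chi(\G_m,l) = \chi(\P^1,l) - 2\chi(\Spec K,l) = 2 - 2 = 0$.

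First I would record these vanishing statements as the hypothesis needed to invoke Corollary \ref{Cor2.4}. Then I would simply apply that corollary with $\lambda = \chi$ (respectively $\lambda = \chi(-,l)$) and $R = \Z$: since the relevant $\lambda(\G_m) = 0$, we conclude $\lambda(X) = \lambda(X^{\G_m})$, which is exactly $\chi(X) = \chi(X^{\G_m})$ in case (1) and $\chi(X,l) = \chi(X^{\G_m},l)$ in case (2). No further work is required because all the substance has been pushed into Theorem \ref{Th2.2} and its corollary.

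There is essentially no obstacle here; the only point requiring a moment of care is justifying that $\chi$ and $\chi(-,l)$ really are additive invariants in the precise sense defined in the introduction (i.e., that the cut-and-paste relation $\lambda(X) = \lambda(Y) + \lambda(X-Y)$ holds for closed $Y \subset X$), and this is exactly where the identities $\chi = \chi_c$ and $\chi(-,l) = \chi_c(-,l)$ enter, together with the long exact sequence for compactly supported cohomology of an open-closed decomposition. These facts are cited earlier in the section, so the corollary follows formally. One should also note, as the text does, that $K$ need not be algebraically closed, since Theorem \ref{Th2.2} and Corollary \ref{Cor2.4} were proved over an arbitrary field — this is the improvement over the original statement of Bia{\l}ynicki-Birula.
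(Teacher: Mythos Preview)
Your proposal is correct and follows exactly the same approach as the paper: the paper derives this corollary in one sentence from Corollary \ref{Cor2.4} together with the observation that $\chi(\G_m)=0$ (for $K\subset\C$) and $\chi(\G_m,l)=0$ (in positive characteristic). Your write-up simply spells out in more detail why these vanishings hold and why $\chi$, $\chi(-,l)$ are additive invariants, which the paper records just before stating the corollary.
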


\subsection{Hodge polynomials}
In this subsection, we assume that $K$ is a field of characteristic zero. Then there is an additive invariant
$H:Var_K\to \Z[u,v]$, with the properties:
\begin{enumerate}
 \item $H_X(u,v):=\sum_{p,q}(-1)^{p+q}\dim H^{q}(X,\Omega_X^p)u^pv^q$ if $X$ is nonsingular and  projective (or complete).
\item  $H_X(u,v)=H_U(u,v)+H_Y(u,v)$ if $Y$ is a closed algebraic subset of $X$ and $U=X-Y$.
\item  If $X=Y\times Z$, then $H_X(u,v)=H_Y(u,v)\cdot H_Z(u,v)$.
\end{enumerate}

The existence and uniqueness of such a polynomial follow from Deligne's Mixed Hodge theory(cf. \cite{Deligne1,Deligne2}).
The coefficient of $u^pv^q$ of $H_X(u,v)$ is called the \emph{virtual Hodge $(p,q)$-number} of $X$ and we denote it by $\tilde{h}^{p,q}(X)$.
Note that from the definition, $\tilde{h}^{p,q}(X)$ coincides with the usual Hodge number $(p,q)$-number ${h}^{p,q}(X)$ if $X$ is a smooth
projective variety.
 To apply the results in the last section, we need suitable modifications. Since $\G_m\cong \Spec(K[x,x^{-1}])$, we have
$H_{\G_m}(u,v)=uv-1\neq 0$. So Corollary \ref{Cor2.4} can not be applied directly to the additive invariant $H$. However, if
we take the values of $H$  in the quotient $\Z[u,v]/\langle uv-1\rangle$, i.e., the composed map of $H$ with the quotient
homomorphism $ \Z[u,v]\to  \Z[u,v]/\langle uv-1\rangle$, then  we get a new additive invariant $\widetilde{H}:Var_K\to
 \Z[u,v]/\langle uv-1\rangle\cong \Z[u,u^{-1}]$. This modified additive invariant $\widetilde{H}$ satisfies  $\widetilde{H}(\G_m)=0$.
 The  following result is from Corollary \ref{Cor2.4}.

\begin{corollary}\label{Cor3.2}
Suppose that $X$ admits a $\G_m$-action with the fixed point set $X^{\G_m}$. Then
$$\widetilde{H}_X(u)=\widetilde{H}_{X^{\G_m}}(u)\in \Z[u,u^{-1}].$$
\end{corollary}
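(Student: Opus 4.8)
The plan is to invoke Corollary \ref{Cor2.4} directly, taking for $\lambda$ the modified Hodge invariant $\widetilde{H}$ constructed just above the statement. Since Corollary \ref{Cor2.4} asserts $\lambda(X) = \lambda(X^{\G_m})$ for every additive invariant $\lambda$ with $\lambda(\G_m) = 0$, the entire content of the corollary reduces to checking that $\widetilde{H}$ is such an invariant.

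First I would record that $\widetilde{H} = \pi \circ H$, where $\pi \colon \Z[u,v] \to \Z[u,v]/\langle uv-1\rangle$ is the canonical projection. Because $\pi$ is a ring homomorphism, it carries the three defining relations of the additive invariant $H$ — isomorphism invariance, the scissor relation $H_X = H_U + H_Y$ for $Y$ closed in $X$ and $U = X - Y$, and multiplicativity $H_{Y\times Z} = H_Y \cdot H_Z$ — to the corresponding relations for $\widetilde{H}$, so that $\widetilde{H} \colon Var_K \to \Z[u,v]/\langle uv-1\rangle \cong \Z[u,u^{-1}]$ is again an additive invariant. Then I would note, as already observed in the discussion preceding the statement, that $H_{\G_m}(u,v) = uv - 1$, hence $\widetilde{H}(\G_m) = \pi(uv-1) = 0$ in the quotient.

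With these two facts in hand, Corollary \ref{Cor2.4} applied to $\lambda = \widetilde{H}$ yields exactly $\widetilde{H}_X(u) = \widetilde{H}_{X^{\G_m}}(u)$ in $\Z[u,v]/\langle uv-1\rangle \cong \Z[u,u^{-1}]$, which is the assertion. There is no real obstacle here: the only point that deserves a word is that the scissor relation is being applied to the closed subscheme $X^{\G_m} \subseteq X$ and its complement (which in turn is $A$-equivalent to $X$ by Theorem \ref{Th2.2}), so everything takes place inside the category on which $\widetilde{H}$ is defined; extending $\widetilde{H}$ additively from varieties to all schemes of finite type over $K$, should this be needed for $X^{\G_m}$, is routine and compatible with $\pi$.
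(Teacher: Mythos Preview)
Your proposal is correct and matches the paper's approach exactly: the paper states that the corollary ``is from Corollary \ref{Cor2.4}'' after observing that $\widetilde{H}$ is an additive invariant with $\widetilde{H}(\G_m)=0$, and you simply spell out why $\widetilde{H}=\pi\circ H$ inherits the additive-invariant axioms from $H$ via the ring homomorphism $\pi$. Your extra remark about the scissor relation and $X^{\G_m}$ is harmless elaboration but not needed, since Corollary \ref{Cor2.4} already packages all of that.
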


Equivalently, Corollary \ref{Cor3.2} can be written in a different way as the following:
\begin{equation}\label{eq01}
 \sum_{p-q=i}\tilde{h}^{p,q}(X)= \sum_{p-q=i}\tilde{h}^{p,q}(X^{\G_m}).
\end{equation}
In particular, if $X^{\G_m}$ is  dimension zero over $K$, then $\widetilde{H}_{X^{\G_m}}(u)$ is independent of $u$
and so is $\widetilde{H}_{X}(u)$.
In this case, the virtual Hodge $(p,q)$-numbers
of $X$ satisfies the the following relation:
\begin{equation}\label{eq002}
 \sum_{p-q=i}\tilde{h}^{p,q}(X)=0
\end{equation}
for all $i\neq 0$ and
$$
\sum_{p\geq0}\tilde{h}^{p,p}(X)=\chi(X).
$$

More generally, Equation (\ref{eq002}) holds for all $|i|> \dim X^{\G_m}$.
In the case that $X$ is smooth and projective over $k$, then $\tilde{h}^{p,q}(X)={h}^{p,q}(X)\geq 0$. So we obtain
from Equation (\ref{eq01}) that $\sum_{p-q=i}{h}^{p,q}(X)= \sum_{p-q=i}{h}^{p,q}(X^{\G_m})$. In particular,
${h}^{p,q}(X)=\tilde{h}^{p,q}(X)=0$ for all $p,q$ such that $|p-q|>\dim (X^{\G_m})$. This is a pretty simpler proof of a slight weaker version
of Bia{\l}ynicki-Birula decomposition theorem (cf. \cite{Bialynicki-Birula}).

For example, if $X$ is nonsingular toric projective variety over $\C$, then one has $h^{p,q}(X)=0$ for all $p\neq q$ and
$h^{p,p}(X)=\beta^{2p}(X)$. This follows from the fact that a nonsingular projective
toric variety admits a $\G_m$-action with finite isolated fixed points.

\medskip
Now we consider algebraic varieties  admitting actions of the additive group $\G_a$.  Since $\G_a\cong \Spec (K[x])$,
we have $H_{\G_a}(u,v)=uv$. To apply Corollary \ref{Cor2.4}, we need to take the value in
$\Z[u,v]/\langle uv\rangle$. That is, the composed map of $H$ with the quotient map $\Z[u,v]\to \Z[u,v]/\langle uv\rangle$
gives us an additive invariant $\overline{H}_X(u,v)$ such that $\overline{H}_{\G_a}(u,v)=0$.

\begin{corollary}\label{Cor3.4}
Suppose that $X$ admits a $\G_a$-action with the fixed point set $X^{\G_a}$. Then
$$\overline{H}_X(u,v)=\overline{H}_{X^{\G_a}}(u,v)\in \Z[u,v]/\langle uv\rangle.$$
\end{corollary}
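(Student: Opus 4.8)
The plan is to prove Corollary \ref{Cor3.4} by the same mechanism used for Corollary \ref{Cor3.2}, simply replacing the relation $\langle uv-1\rangle$ by $\langle uv\rangle$ and the group $\G_m$ by $\G_a$. The heart of the argument is to exhibit $\overline{H}$ as a genuine additive invariant whose value on the acting group is zero, and then to quote Corollary \ref{Cor2.4} verbatim; no new geometric input beyond Theorem \ref{Th2.2} is needed.

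Concretely, I would first let $q\colon\Z[u,v]\to\Z[u,v]/\langle uv\rangle$ be the quotient ring homomorphism and set $\overline{H}:=q\circ H$. Since $H\colon Var_K\to\Z[u,v]$ is an additive invariant, each of the three defining properties is inherited by $\overline{H}$: isomorphism-invariance is immediate, the scissor relation $H_X=H_U+H_Y$ is an identity in $\Z[u,v]$ that survives applying the additive map $q$, and multiplicativity survives because $q$ is a ring map. Moreover $\overline{H}(\Spec K)=q(1)=1\neq 0$, so $\overline{H}$ is a nontrivial additive invariant. Next I would record the value on $\G_a$: since $\char K=0$ and $\G_a\cong\Spec(K[x])\cong\A^1_K$, stratifying $\P^1_K=\A^1_K\sqcup\Spec K$ and using properties (1) and (2) of $H$ gives $H_{\G_a}(u,v)=H_{\P^1}(u,v)-H_{\Spec K}(u,v)=(1+uv)-1=uv$, hence $\overline{H}(\G_a)=q(uv)=0$. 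Applying Corollary \ref{Cor2.4} to the additive invariant $\lambda=\overline{H}$ and the given $\G_a$-action on $X$ then yields $\overline{H}_X(u,v)=\overline{H}_{X^{\G_a}}(u,v)$ in $\Z[u,v]/\langle uv\rangle$, which is exactly the claim.

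The only point that genuinely requires attention — and the closest thing to an obstacle here — is the passage from varieties to the possibly reducible and possibly non-reduced scheme $X$, and likewise to the fixed locus $X^{\G_a}$, since Theorem \ref{Th2.2} and Corollary \ref{Cor2.4} are phrased for reduced irreducible schemes. I would handle this by a Noetherian induction over the irreducible components of $X_{\mathrm{red}}$: each component is $\G_a$-invariant because $\G_a$ is connected (a connected group acts trivially on the finite set of components), so Theorem \ref{Th2.2} applies to each component, and the pieces are reassembled using the additivity of $\overline{H}$; nilpotents contribute nothing since $\overline{H}$, like every additive invariant considered, is defined through the reduced structure. This is precisely the bookkeeping already tacit in Corollary \ref{Cor3.2}. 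Finally, as with that corollary, I would state the equivalent reformulation in terms of coefficients: writing $\overline{H}_X(u,v)=\sum_{p\ge 0}a_p(X)u^p+\sum_{q\ge 1}b_q(X)v^q$ (the only monomials surviving in $\Z[u,v]/\langle uv\rangle$), the identity says $a_p(X)=a_p(X^{\G_a})$ and $b_q(X)=b_q(X^{\G_a})$ for all $p,q$; in particular, when the $\G_a$-action has only isolated fixed points $\overline{H}_{X^{\G_a}}$ is a constant, forcing $\tilde{h}^{p,0}(X)=\tilde{h}^{0,q}(X)=0$ for all $p,q>0$, which is the form of the result to be used for the virtual Hodge numbers of Chow varieties.
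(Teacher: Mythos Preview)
Your proposal is correct and follows exactly the paper's approach: the paper states Corollary~\ref{Cor3.4} without a separate proof, having just observed that $H_{\G_a}(u,v)=uv$ so that the composed invariant $\overline{H}=q\circ H$ satisfies $\overline{H}(\G_a)=0$, whence Corollary~\ref{Cor2.4} applies. Your additional remarks on reducibility and the coefficient reformulation are elaborations the paper leaves implicit, but the core argument is identical.
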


Corollary \ref{Cor3.4} implies that the following equations
\begin{equation*}
\tilde{h}^{p,0}(X)= \tilde{h}^{p,0}(X^{\G_m})
\end{equation*}
and
\begin{equation*}
\tilde{h}^{0,q}(X)= \tilde{h}^{0,q}(X^{\G_m})
\end{equation*}
hold for $p,q\geq0$.

In the case that $X$ is smooth and projective with a $\G_a$-action, we have  ${h}^{p,0}(X)= \tilde{h}^{p,0}(X)=0$
and ${h}^{0,q}(X)= \tilde{h}^{0,q}(X)=0$ for $p,q>\dim X^{\G_a}$.

By applying Corollary \ref{Cor3.4} to an algebraic connected affine group variety, we have the following result.
\begin{corollary}
 Let $G$ be an algebraic connected affine group variety. Then
$$
\tilde{h}^{p,0}(G)=\tilde{h}^{0,q}(G)=0
$$
for all $p,q>0$. In particular, the $\rm 1st$ virtual Betti number of $G$ is zero.
\end{corollary}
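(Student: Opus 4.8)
The plan is to apply Corollary \ref{Cor3.4} to the affine group variety $G$, using the structure theory of connected affine algebraic groups to produce a $\G_a$-action with a small fixed point set whenever $G$ is not already a torus, and to handle the torus case separately. First I would recall that a connected affine algebraic group $G$ over a field of characteristic zero is either a torus or contains a copy of $\G_a$: indeed, if $G$ is reductive of positive dimension it contains a root subgroup isomorphic to $\G_a$ unless $G$ is a torus, and if $G$ is not reductive its unipotent radical is a nontrivial connected unipotent group, hence contains $\G_a$. In either case, letting $H \cong \G_a$ act on $G$ by left translations produces a free action, so $G^{\G_a} = \emptyset$, and Corollary \ref{Cor3.4} gives $\overline{H}_G(u,v) = \overline{H}_\emptyset(u,v) = 0$ in $\Z[u,v]/\langle uv\rangle$. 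Reading off the coefficients of $u^p$ and $v^q$ for $p,q>0$ (these are not killed by the relation $uv=0$, since $uv$ is the only monomial in the ideal that has positive degree in both variables and here only one variable has positive degree) yields $\tilde{h}^{p,0}(G) = \tilde{h}^{0,q}(G) = 0$ for all $p,q>0$.

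It then remains to dispose of the case where $G$ is a torus, $G \cong \G_m^{\times n}$. Here $\tilde H$ is multiplicative, so $H_G(u,v) = (uv-1)^n$, whose expansion is a polynomial in the single variable $uv$; consequently all of its nonzero coefficients sit in bidegrees $(p,p)$, and in particular $\tilde{h}^{p,0}(G) = \tilde{h}^{0,q}(G) = 0$ for $p,q>0$ trivially. (Alternatively one could note that a torus acts on itself by translations through any one-parameter subgroup $\G_m \hookrightarrow G$ with empty fixed locus and invoke Corollary \ref{Cor3.2}, but the direct computation is cleaner.) Combining the two cases establishes the vanishing statement for all connected affine group varieties.

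For the final assertion about the first virtual Betti number, I would use that the virtual Poincaré polynomial specializes the Hodge polynomial: the first virtual Betti number $\tilde\beta^1(G)$ equals $\tilde h^{1,0}(G) + \tilde h^{0,1}(G)$ (the bidegree-$(1,0)$ and $(0,1)$ pieces are the only ones contributing to total degree $1$, as there is no $(p,q)$ with $p+q=1$ other than these two). Both terms vanish by what was just proved, so $\tilde\beta^1(G) = 0$.

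The main obstacle is the first step: justifying that a connected affine algebraic group which is not a torus genuinely contains a subgroup isomorphic to $\G_a$, and that this copy acts without fixed points by translation. The fixed-point-freeness is immediate since translation by any nonidentity element of a group acting on itself has no fixed point, so a nontrivial subgroup acting by translation has empty fixed locus; the genuine content is the existence of the $\G_a$, which I would extract from the Levi decomposition together with the facts that nontrivial connected unipotent groups (in characteristic zero) are successive extensions of copies of $\G_a$ and hence contain $\G_a$, and that a non-torus reductive group has a nontrivial root subgroup. One should also double-check that the relation $uv=0$ does not accidentally conflate $\tilde h^{p,0}$ or $\tilde h^{0,q}$ with other coefficients — but since the ideal $\langle uv\rangle$ is spanned by monomials $u^av^b$ with $a,b\geq 1$, the images of $u^p$ ($p>0$) and $v^q$ ($q>0$) and $1$ remain linearly independent, so reading off these coefficients in the quotient is legitimate.
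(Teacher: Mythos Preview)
Your proof is correct and follows essentially the same approach as the paper: split into the torus case (computed directly via $H_{\G_m^{\times n}}(u,v)=(uv-1)^n$) and the non-torus case (find a copy of $\G_a$ and use its free translation action together with Corollary~\ref{Cor3.4}). You supply more detail than the paper on why a non-torus connected affine group contains $\G_a$ and on why the coefficients $\tilde h^{p,0}$, $\tilde h^{0,q}$ can be read off in the quotient $\Z[u,v]/\langle uv\rangle$, but the argument is the same.
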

\begin{proof}
If $G$ is not a torus, then $G$ contain a subgroup $H$ isomorphic to $\G_a$.  The action of $\G_a\cong H$  by left translations of
$G$ has no fixed point. By Corollary \ref{Cor3.4}, we get $\tilde{h}^{p,0}(G)=\tilde{h}^{0,q}(G)=0$
for all $p,q\geq 0$. If $G=\G_m^{\times n}$, then we get  $H_G(u,v)=(uv-1)^n$ since $H$ is an additive invariant and  $H_{\G_m}(u,v)=uv-1$.
From this formula we get immediately that $\tilde{h}^{p,0}(G)=\tilde{h}^{0,q}(G)=0$
for all $p,q>0$.
\end{proof}

\subsection{Counting points}

Let $\F_q$ be the finite field of $q$ elements and let $X$ be an algebraic scheme defined over  $\F_q$.
Let $N_n(X)$ denote the number of closed points in $X(\F_{q^n})$.  Note that $N_n$ defines an additive invariant
on the category $Var_{\F_q}$
of algebraic varieties over the field ${\F_q}$ with integer values,
is a map
$$
N_n:Var_{\F_q}\to \Z
$$
such that
$$\left\{
\begin{array}{lll}
N_n(X)=N_n(X')&\hbox{for $X\cong X'$,}\\
 N_n(X)=N_n(Y)+N_n(X-Y)& \hbox{for $Y$ closed in  $X$,}\\
 N_n(X\times Y)=N_n(X)\cdot N_n(Y)&\hbox{for every $X$ and  $Y$.}
\end{array}\right.
$$

For example, if $X=A_{\F_q}^1=\Spec(\F_q[x])$ the affine line over $\F_q$, then $N_n(X)=q^n$; if $X=\Spec(\F_q[x,x^{-1}])$,
then $N_n(X)=q^n-1$.

\begin{lemma}\label{lemma3.6}
Let $X$, $Y$ and $A$ be algebraic varieties over $\F_q$. Suppose that $X$ is $A$-equivalent to $Y$. Then
\begin{enumerate}
 \item if $A=\Spec(\F_q[x])$ then $N_n(X)\equiv N_n(Y) ~\mod (q)$.
\item if $A=\Spec(\F_q[x,x^{-1}])$ then $N_n(X)\equiv N_n(Y) ~\mod ~(q-1)$.
\end{enumerate}
\end{lemma}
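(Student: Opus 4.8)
The plan is to reduce immediately to the case of a single simple $A$-equivalence, and then to combine additivity of point counting with the elementary congruences $q^n \equiv 0 \pmod{q}$ and $q^n \equiv 1 \pmod{q-1}$. This is the point-counting analogue of the earlier lemma stating that $\lambda(A) = 0$ implies $\lambda(X) = \lambda(Y)$ for $A$-equivalent varieties, the vanishing being replaced here by the divisibility of $N_n(A)$ by the relevant modulus.

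First I would observe that for a fixed positive integer $m$, congruence modulo $m$ is an equivalence relation on $\Z$, so the relation ``$N_n(X) \equiv N_n(Y) \pmod{m}$'' is an equivalence relation on schemes. Since the $A$-equivalence is by definition the smallest equivalence relation containing the relation of simple $A$-equivalence, it is enough to prove (1) with $m = q$ and (2) with $m = q-1$ under the assumption that $X$ is \emph{simply} $A$-equivalent to $Y$.

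In that situation there is a closed subscheme $X' \subset X$ with $X' \cong Y$ and an isomorphism $X - X' \xrightarrow{\sim} Z \times A$ for some algebraic scheme $Z$. I would extend $N_n$ to all schemes of finite type over $\F_q$ by setting $N_n(Z) := \#\,Z(\F_{q^n})$; this extension remains additive for a scheme decomposed into a closed subscheme and its open complement, and remains multiplicative on products since $(Z \times A)(\F_{q^n}) = Z(\F_{q^n}) \times A(\F_{q^n})$. Additivity along $X = X' \sqcup (X - X')$ then yields
\[
N_n(X) = N_n(X') + N_n(X - X') = N_n(Y) + N_n(Z)\cdot N_n(A).
\]
Finally I would compute $N_n(A)$: for $A = \Spec(\F_q[x]) = \A^1_{\F_q}$ one has $N_n(A) = q^n$, which is divisible by $q$; for $A = \Spec(\F_q[x,x^{-1}])$ one has $N_n(A) = q^n - 1$, and since $q \equiv 1 \pmod{q-1}$ this is divisible by $q - 1$. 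In each case $N_n(Z)\cdot N_n(A)$ is $\equiv 0$ modulo the relevant modulus, which gives the two asserted congruences.

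I do not expect a real obstacle here. The one point worth a remark is that the scheme $Z$ produced by a simple $A$-equivalence need not be reduced or irreducible, so $N_n(Z)$ is not literally an instance of the additive invariant $N_n$ on $Var_{\F_q}$; but the extension of $N_n$ to arbitrary finite-type $\F_q$-schemes used above is entirely formal and causes no trouble.
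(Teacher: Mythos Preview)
Your argument is correct and follows essentially the same route as the paper: reduce to a single simple $A$-equivalence, use additivity and multiplicativity of $N_n$ to write $N_n(X)=N_n(Y)+N_n(Z)\cdot N_n(A)$, and finish with $N_n(A)=q^n$ or $q^n-1$. Your extra care in justifying the reduction to simple equivalence and in extending $N_n$ to arbitrary finite-type $\F_q$-schemes is a welcome clarification but does not change the strategy.
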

\begin{proof}
It is enough to consider the case that $X$ is simply $A$-equivalent to $Y$ since $N_n$ is an additive invariant.
By definition, there is an open quasi-projective
scheme $U$ of $X$ such that $X-U$ is isomorphic to $Y$ and an isomorphism $f:U\to U'\times A$, where $U'$ is an algebraic scheme.
In this case, we have $N_n(X)=N_n(Y)+N_n(U)=N_n(Y)+N_n(U')N_n(A)$.

Case (1). If $A=\Spec(\F_q[x])$, then $N_n(A)=q^n\equiv 0 ~mod (q)$. Hence   $N_n(X)\equiv N_n(Y)~\mod (q)$.

Case (2). If $A=\Spec(\F_q[x,x^{-1}])$, then $N_n(A)=q^n-1\equiv 0 ~mod (q-1)$. Hence   $N_n(X)\equiv N_n(Y)~\mod (q-1)$.

This completes the proof of the lemma.
\end{proof}

As an application of Theorem \ref{Th2.2} and Lemma \ref{lemma3.6}, we have the following result.

\begin{corollary} Suppose that an algebraic scheme $X$ admits $G$-action.
\begin{enumerate}
\item If $G=\G_a$, then $N_n(X)\equiv N_n(X^G) ~\mod (q)$.
\item If $G=\G_m$, then $N_n(X)\equiv N_n(X^G) ~\mod (q-1)$.
\end{enumerate}

\end{corollary}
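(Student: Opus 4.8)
The plan is to combine Theorem \ref{Th2.2} with Lemma \ref{lemma3.6} in exactly the way the preceding corollaries combined the $A$-equivalence theorem with the appropriate additive invariant. Suppose $X$ is an algebraic scheme over $\F_q$ admitting a $G$-action, where $G$ is either $\G_a$ or $\G_m$. The first step is to invoke Theorem \ref{Th2.2}, which tells us that $X$ is $A$-equivalent to $X^G$, with $A=\Spec(\F_q[x])$ when $G=\G_a$ and $A=\Spec(\F_q[x,x^{-1}])$ when $G=\G_m$. The second step is to feed this $A$-equivalence into Lemma \ref{lemma3.6}: case (1) of that lemma gives $N_n(X)\equiv N_n(X^G)\ \mod(q)$ when $A=\Spec(\F_q[x])$, and case (2) gives $N_n(X)\equiv N_n(X^G)\ \mod(q-1)$ when $A=\Spec(\F_q[x,x^{-1}])$. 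This is precisely the asserted congruence in each of the two cases.

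There is one technical caveat worth flagging. Theorem \ref{Th2.2} is stated for a \emph{reduced and irreducible} algebraic scheme $X$, whereas the corollary speaks of an arbitrary algebraic scheme. The fix is routine: since $N_n$ only depends on closed points and is additive over a stratification, one reduces to the reduced case and then decomposes $X_{\mathrm{red}}$ into its irreducible components, applying Theorem \ref{Th2.2} to each $G$-invariant piece obtained by a standard noetherian stratification of $X$ into locally closed $G$-stable subsets with irreducible closures. On each stratum the congruence holds, and summing the congruences (all modulo the same $q$ or $q-1$) gives the result for $X$. Alternatively, one simply states the corollary under the same reduced-irreducible hypothesis as Theorem \ref{Th2.2}, in which case no extra argument is needed at all.

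The main (and essentially only) obstacle is therefore bookkeeping rather than mathematics: making sure the $G$-equivariant stratification is handled cleanly so that Theorem \ref{Th2.2} applies componentwise, and checking that $N_n(X^G)$ computed stratum-by-stratum agrees with $N_n$ of the genuine fixed-point scheme $X^G$ — this uses that taking fixed points commutes with the stratification in the relevant sense, or more simply that $X^G$ is itself covered by the fixed-point loci of the strata. Given the way Lemma \ref{lemma3.6} is phrased purely in terms of $A$-equivalence, once the equivalence $X \sim_A X^G$ is in hand the proof is a one-line citation.

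\begin{proof}
In both cases $G$ is either $\G_a$ or $\G_m$, and we set $A=\Spec(\F_q[x])$ when $G=\G_a$ and $A=\Spec(\F_q[x,x^{-1}])$ when $G=\G_m$. By Theorem \ref{Th2.2}, $X$ is $A$-equivalent to its fixed point set $X^G$. Applying Lemma \ref{lemma3.6} to this $A$-equivalence: in case (1), where $A=\Spec(\F_q[x])$, part (1) of the lemma gives $N_n(X)\equiv N_n(X^G)\ \mod(q)$; in case (2), where $A=\Spec(\F_q[x,x^{-1}])$, part (2) of the lemma gives $N_n(X)\equiv N_n(X^G)\ \mod(q-1)$. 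This is exactly the assertion.
\end{proof}
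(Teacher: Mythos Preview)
Your proof is correct and follows exactly the same approach as the paper: the paper's own proof is the single sentence ``This follows from the combination of Theorem \ref{Th2.2} and Lemma \ref{lemma3.6}.'' Your additional discussion of the reduced-irreducible hypothesis is a fair point that the paper does not address, but the core argument is identical.
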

\begin{proof}
 This follows from the combination of Theorem \ref{Th2.2} and Lemma \ref{lemma3.6}.
 \end{proof}

\section{Additive invariants for Chow varieties }
\subsection{The Chow variety for projective spaces}
In this section we give a direct proof of Corollary \ref{cor1.5} and Theorem \ref{Th1.4}, which not only  is a
 simplification of Lawson and Yau's proof for Theorem \ref{Th01.1} but also works for  Chow varieties over arbitrary algebraically closed field.

Now we give a proof of Corollary \ref{cor1.5} by using Bia{\l}ynicki-Birula's result.

\begin{proof}[The  proof of Corollary \ref{cor1.5}]
We consider the action of $\G_m$ on $\P^{n+1}_K$ given by setting
$$\Phi_t([z_0,...,z_{n},z_{n+1}])=[z_0,...,z_{n},tz_{n+1}],$$
where $t\in \G_m$ and $[z_0,...,z_{n},z_{n+1}]$ are homogeneous coordinates for $\P^{n+1}_K$.

This action on $\P^{n+1}_K$ induces an action of $\G_m$ on $C_{p+1,d}(\P^{n+1})_K$.
From the definition of the action $\G_m$ on $\P^{n+1}_K$, it is pretty clear that any subvariety $V$ of $\dim V=p+1$
is invariant under the action $\G_m$ if the support of $V$ is included in the hyperplane $(z_{n+1}=0)\cong\P^n_K$.

We also observe that if a $(p+1)$-dimensional irreducible
algebraic variety $V$ is defined by a collection of homogeneous polynomials $F_{\lambda}$ on $\P^{n+1}_K$,
but those polynomials are independent of the last coordinate $z_{n+1}$, then $V$ is invariant under $\G_m$.
Geometrically, such a variety $V$ is a cone of over an algebraic subvariety supported in the hyperplane $(z_{n+1}=0)$.

Denote $Q=[0:\cdots:0:1]\in \P^{n+1}_K$ and note that $Q$ is $\G_m$-fixed.
Note that  only those varieties are  irreducible  invariant subvarieties  of  dimension $p+1$ in $\P^{n+1}_K$
under this $\G_m$-action.   To see this, we first observe from the definition of the action that if an irreducible variety
$V$ contains  $Q$ and another fixed point $P$ on $\P^n_K$, then so does the projective line $l_{PQ}$ passing $P$ and $Q$.
Suppose $V\subseteq \P^{n+1}_K$ such that $V\nsubseteq (z_{n+1}=0)\cong \P^n_K$. Since
both $V$ and $\P^n_K$ are $\G_m$-invariant,  $V':=V\cap \P^n_K$ is $\G_m$-invariant. The subvariety $V$ corresponds to the fixed
point set of the restriction of the $\G_m$-action on $V$ when $t\to 0$.
Therefore, the cone $\Sigma_Q V'$ is $\G_m$-invariant.  Note that  we must have $Q\in V$. The point $Q$ corresponds to the fixed
point set of the restriction of the $\G_m$-action on $V$ when $t\to \infty$.
Hence we have $\Sigma_Q V'\subseteq V$. Since $\dim \Sigma_Q V'=p+1=\dim V$ and $V$ is irreducible, we have $\Sigma_Q V'= V$.

The fixed point set $C_{p+1,d}(\P^{n+1})_K^{\G_m}$ of the induced action on $C_{p+1,d}(\P^{n+1})_K$
contains cycles $c$ of the form
 $c=\sum n_kV_k+\sum m_jW_j$ of degree $\deg c:=\sum n_k\deg V_k+\sum m_j\deg W_j=d$, where $V_k\subset \P^n_K$
is irreducible and $W_j=\Sigma_Q W_j'$ for some irreducible variety $W_j\subset \P^n_K$ of $\dim W_j'=p$. Therefore, we have
\begin{equation}\label{eq02}
C_{p+1,d}(\P^{n+1})_K^{\G_m}=\coprod_{i=0}^d\{ C_{p+1,i}(\P^{n})_K\times\Sigma_Q C_{p,d-i}(\P^{n})_K\}.
\end{equation}
Since $\Sigma:C_{p,d-i}(\P^{n})_K\to C_{p,d-i}(\P^{n+1})_K$ induces a homeomorphism onto its image in $C_{p,d-i}(\P^{n+1})_K$, we have
\begin{equation}\label{eqn2}
\begin{array}{ccl}
 \chi(C_{p+1,d}(\P^{n+1})_K^{\G_m},l)&=&\chi(\coprod_{i=0}^d\{ C_{p+1,i}(\P^{n})_K\times\Sigma_Q C_{p,d-i}(\P^{n})_K\},l)\\
 &=&\sum_{i=0}^d \chi(C_{p+1,i}(\P^{n})_K\times \Sigma_Q C_{p,d-i}(\P^{n})_K,l)\\
&=&\sum_{i=0}^d \chi(C_{p+1,i}(\P^{n})_K,l)\cdot \chi(\Sigma_Q C_{p,d-i}(\P^{n})_K,l)\\
&=&\sum_{i=0}^d \chi(C_{p+1,i}(\P^{n})_K,l)\cdot \chi(C_{p,d-i}(\P^{n})_K,l),
\end{array}
\end{equation}
where the second equality follows from the exclusion-inclusion principle of  the Euler-Poincar\'{e} characteristic (cf. \cite{Laumon},
\cite{Hu}), the third equality follows from the K\"{u}nneth formula for $l$-adic cohomology.

From Theorem \ref{Th01.2}, we have
\begin{equation}\label{eqn3}
 \chi(C_{p+1,d}(\P^{n+1})_K,l)=\chi(C_{p+1,d}(\P^{n+1})_K^{\G_m},l).
\end{equation}

The combination of Equation (\ref{eqn2}) and (\ref{eqn3}) gives us a recursive formula
\begin{equation}\label{eqn4}
 \chi(C_{p+1,d}(\P^{n+1})_K,l)=\sum_{i=0}^d \chi(C_{p+1,i}(\P^{n})_K,l)\cdot \chi(C_{p,d-i}(\P^{n})_K,l).
\end{equation}

The above idea also can be used to calculate the initial values $\chi(C_{0,d}(\P^{n})_K,l)$ as follows.
By definition, an element in $C_{0,d}(\P^{n+1})_K$ means an effective cycle $c$ on $\P^{n+1}_K$ such that $\deg c=d$.
Since a point $P$ is a
fixed point of the $\G_m$-action if and only if $P=Q$ or $P\in (z_{n+1}=0)\cong\P^n_K$, we get $c\in C_{0,d}(\P^{n+1})_K^{\G_m}$
if and only if $c=m Q+\sum n_iP_i$, where $n_i\geq 0$ and $\sum n_i=d-m$. Hence
\begin{equation}\label{eqn06}
C_{0,d}(\P^{n+1})_K^{\G_m}=\coprod_{m=0}^d C_{0,d-m}(\P^{n})_K.
\end{equation}

This together with Theorem \ref{Th01.2} implies the following formula for the Euler-Poincar\'{e} characteristics.
\begin{equation*}
\chi(C_{0,d}(\P^{n+1})_K,l)=\sum_{m=0}^d \chi(C_{0,d-m}(\P^{n})_K,l).
\end{equation*}
From this recursive formula, we get
\begin{equation}\label{eqn5}
 \chi(C_{0,d}(\P^{n})_K,l)=(^{n+d}_{~d}).
\end{equation}

The combination of Equation (\ref{eqn4}) and (\ref{eqn5}) completes the alternate  proof of Corollary \ref{cor1.5}.
\end{proof}

If we set
$$ Q_{p,n}(t):=\sum_{d=0}^{\infty} \chi(C_{p,d}(\P^n)_K,l) t^d,
$$
then Corollary \ref{cor1.5} may be restated as
\begin{equation*}
Q_{p,n}(t)=\bigg(\frac{1}{1-t}\bigg)^{(^{n+1}_{p+1})}, \quad\hbox{where $\chi(C_{p,0}(\P^n)_K):=1$} .
\end{equation*}

\begin{proof}[The proof of Theorem \ref{Th1.4}]
Note that  Bialynicki-Birula's result in \cite[Th.2]{Bialynicki-Birula2}  and the assumption $\lambda(\G_m)=0$ imply that
 if $X$ is a  projective algebrac set over $K$ with a $\G_m$-action, then
$$\lambda(X)=\lambda(X^{\G_m}),$$
 where $X^{\G_m}$ is the fixed point set of this action.

 This together with Equation (\ref{eq02}) gives us the following recursive formula
\begin{equation}\label{eqn12}
 \lambda(C_{p+1,d}(\P^{n+1})_K)=\sum_{i=0}^d \lambda(C_{p+1,i}(\P^{n})_K)\cdot \lambda(C_{p,d-i}(\P^{n})_K).
\end{equation}

By  Equation (\ref{eqn06}) we get
\begin{equation*}
 \lambda(C_{0,d}(\P^{n+1})_K)=\sum_{m=0}^d  \lambda(C_{0,d-m}(\P^{n})_K).
\end{equation*}

This recursive formula together with the assumption $\lambda(\Spec K)=1$ implies that
\begin{equation}\label{eqn13}
  \lambda(C_{0,d}(\P^{n})_K)=(^{n+d}_{~d}).
\end{equation}

By the same argument as in the proof of Corollary \ref{cor1.5}, we obtain the formula in the theorem
from Equation (\ref{eqn12}) and (\ref{eqn13}).

\end{proof}

In the subsection below we will compute the virtual Hodge polynomial and numbers of the Chow varieties for
 projective spaces  over an algebraically closed subfield of $\C$.

\begin{corollary}\label{cor8}
Assume that $\char(K)=0$ and let $\widetilde{H}: Var_K\to \Z[u,u^{-1}]$ be given as above. Then we have
$$
\widetilde{H}(C_{p,d}(\P^n)_K)=\big(^{v_{p,n}+d-1}_{\quad\quad d}\big)\in \Z[u,u^{-1}].
$$
\end{corollary}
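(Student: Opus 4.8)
The plan is to deduce this statement immediately from Theorem~\ref{Th1.4} applied to the additive invariant $\widetilde H$. First I would recall that $\widetilde H$ is, by construction, the composite of the virtual Hodge polynomial $H\colon Var_K\to\Z[u,v]$ with the quotient ring homomorphism $\pi\colon\Z[u,v]\to\Z[u,v]/\langle uv-1\rangle\cong\Z[u,u^{-1}]$. Since $\pi$ is a ring homomorphism it preserves sums and products, so $\widetilde H=\pi\circ H$ inherits from $H$ the three defining properties of an additive invariant; hence $\widetilde H\colon Var_K\to\Z[u,u^{-1}]$ is indeed an additive invariant.

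Next I would verify the two hypotheses of Theorem~\ref{Th1.4}. On the one hand $H_{\G_m}(u,v)=uv-1$, which lies in the ideal $\langle uv-1\rangle$, so $\widetilde H(\G_m)=0$; on the other hand $H_{\Spec K}(u,v)=1$ and $\pi(1)=1$, so $\widetilde H(\Spec K)=1$. Theorem~\ref{Th1.4} with $\lambda=\widetilde H$ and $R=\Z[u,u^{-1}]$ then gives exactly $\widetilde H(C_{p,d}(\P^n)_K)=\binom{v_{p,n}+d-1}{d}$, the right-hand side being the integer $\binom{v_{p,n}+d-1}{d}$ regarded as a constant in $\Z[u,u^{-1}]$.

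If one prefers not to quote Theorem~\ref{Th1.4} as a black box, the argument used for Corollary~\ref{cor1.5} can be run verbatim with $\chi(-,l)$ replaced by $\widetilde H$: Corollary~\ref{Cor3.2} supplies the fixed-point identity $\widetilde H_X(u)=\widetilde H_{X^{\G_m}}(u)$ for a projective $X$ with a $\G_m$-action; Equation~(\ref{eq02}), together with additivity of $\widetilde H$ over the disjoint union and multiplicativity over the product, gives the recursion $\widetilde H(C_{p+1,d}(\P^{n+1})_K)=\sum_{i=0}^d\widetilde H(C_{p+1,i}(\P^n)_K)\cdot\widetilde H(C_{p,d-i}(\P^n)_K)$; Equation~(\ref{eqn06}) gives the base recursion $\widetilde H(C_{0,d}(\P^{n+1})_K)=\sum_{m=0}^d\widetilde H(C_{0,d-m}(\P^n)_K)$; and $\widetilde H(\Spec K)=1$ starts the induction with $\widetilde H(C_{0,d}(\P^n)_K)=\binom{n+d}{d}$. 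Solving the recursion as before yields the claimed formula, and the formal identity $Q_{p,n}(t)=(1-t)^{-v_{p,n}}$ carries over with coefficients now in $\Z[u,u^{-1}]$.

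There is no serious obstacle here. The one point deserving a word of care is purely bookkeeping: in characteristic zero $\widetilde H$ is defined through the scissor relations, so the decomposition~(\ref{eq02}) of the fixed locus into locally closed pieces, each a product of Chow varieties with the second factor closed-embedded via the cone map $\Sigma_Q$, is handled directly by additivity and multiplicativity of $\widetilde H$; no topological-invariance input beyond what already underlies Theorem~\ref{Th1.4} is required.
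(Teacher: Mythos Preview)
Your proof is correct and follows exactly the paper's approach: verify that $\widetilde{H}(\Spec K)=1$ and $\widetilde{H}(\G_m)=0$, then apply Theorem~\ref{Th1.4}. The paper's proof is a terse two-line version of your first two paragraphs; your additional unpacking of the recursion and the remark about scissor relations are fine but not needed.
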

\begin{proof}
It is easy to check from the definition of  $\widetilde{H}$ that $\widetilde{H}(\Spec K)=1$ and
 ${H}(\G_m)=uv-1$. So  $\widetilde{H}(\G_m)=0$. Now the corollary follows from Theorem \ref{Th1.4}.
\end{proof}

\begin{remark}
From Corollary \ref{cor8}, $\widetilde{H}(C_{p,d}(\P^n)_K)$ is independent of $u$.  This implies that
the coefficients of both $u$ and $v$ in the Hodge polynomial ${H}(C_{p,d}(\P^n)_K)$ vanish.
\end{remark}

By applying Corollary \ref{Cor3.2} to the Chow variety $C_{m,d}(\P^n_K)$ parameterizing algebraic $m$-cycles
of degree $d$ in the projective space $\P^n_K$, we have the following result.

\begin{corollary}\label{Cor3.3}
For integers $n\geq m\geq 0$ and $d\geq 0$, the  virtual Hodge $(p,q)$-number
of the Chow variety $C_{m,d}(\P^n_K)$ satisfies the following equations:
$$
\sum_{p-q=i}\tilde{h}^{p,q}(C_{m,d}(\P^n_K))=0
$$
for all $i\neq 0$ and
$$
\sum_{p\geq0}\tilde{h}^{p,p}(C_{m,d}(\P^n_K))=\chi(C_{m,d}(\P^n_K)).
$$

\end{corollary}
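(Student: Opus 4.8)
The plan is to read off both equations from Corollary~\ref{cor8}, which already evaluates $\widetilde{H}(C_{m,d}(\P^n_K))$ explicitly, together with the bookkeeping that turns the two-variable Hodge polynomial into the one-variable invariant $\widetilde{H}_X(u)$. First I would recall that $\widetilde{H}$ is by definition $H$ followed by the quotient map $\Z[u,v]\to\Z[u,v]/\langle uv-1\rangle\cong\Z[u,u^{-1}]$, i.e.\ the substitution $v\mapsto u^{-1}$; hence for any variety $X$,
\[
\widetilde{H}_X(u)=\sum_{p,q}\tilde{h}^{p,q}(X)\,u^{p}u^{-q}=\sum_{i}\Big(\sum_{p-q=i}\tilde{h}^{p,q}(X)\Big)u^{i},
\]
so the coefficient of $u^i$ in $\widetilde{H}_X(u)$ is exactly $\sum_{p-q=i}\tilde{h}^{p,q}(X)$. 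Thus the statement to be proved says precisely that $\widetilde{H}(C_{m,d}(\P^n_K))$ is a constant whose value equals $\chi(C_{m,d}(\P^n_K))$; this is the same template as in the discussion following Corollary~\ref{Cor3.2}, applied to $X=C_{m,d}(\P^n_K)$.

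For the constancy I would quote Corollary~\ref{cor8}: since $\widetilde{H}$ is an additive invariant with $\widetilde{H}(\G_m)=0$ and $\widetilde{H}(\Spec K)=1$, Theorem~\ref{Th1.4} gives $\widetilde{H}(C_{m,d}(\P^n_K))=\binom{v_{m,n}+d-1}{d}\in\Z[u,u^{-1}]$, which is a constant. Comparing coefficients of $u^i$ then yields $\sum_{p-q=i}\tilde{h}^{p,q}(C_{m,d}(\P^n_K))=0$ for all $i\neq0$, which is the first equation, while the coefficient of $u^0$ gives $\sum_{p\geq0}\tilde{h}^{p,p}(C_{m,d}(\P^n_K))=\binom{v_{m,n}+d-1}{d}$. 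Alternatively, in the spirit of ``applying Corollary~\ref{Cor3.2}'', one can avoid Corollary~\ref{cor8} and argue directly from the natural $\G_m$-action on $C_{m,d}(\P^n_K)$ together with the description of its fixed locus as in Equation~\eqref{eq02}, which produces the same recursion used in the proof of Corollary~\ref{cor1.5}; but this merely reproves Corollary~\ref{cor8}.

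To finish I would identify this constant with $\chi(C_{m,d}(\P^n_K))$. The ordinary Euler characteristic is an additive invariant on $Var_K$ with $\chi(\Spec K)=1$ and $\chi(\G_m)=0$, so Theorem~\ref{Th1.4} applies once more and gives $\chi(C_{m,d}(\P^n_K))=\binom{v_{m,n}+d-1}{d}$, which coincides with the $u^0$-coefficient computed above; this is the second equation. One could instead observe that $\widetilde{H}_X(1)=H_X(1,1)=\chi(X)$ for every $X$, but routing through $\lambda=\chi$ keeps the argument self-contained.

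I expect no genuinely hard step here: granted Corollary~\ref{cor8} and Theorem~\ref{Th1.4}, the statement is a comparison of coefficients. The only points needing care are the substitution $v\mapsto u^{-1}$, which collapses each anti-diagonal $\{\,p-q=i\,\}$ onto the single monomial $u^i$, and checking that the hypotheses of the cited results are in force---$\char(K)=0$ and $K\subseteq\C$ so that $\widetilde{H}$ is defined, and $n\geq m\geq0$, $d\geq0$ for the relevant Chow varieties. Were one to insist on a proof not using Corollary~\ref{cor8}, the actual content would be setting up the $\G_m$-action and computing its fixed-point set---exactly the computation already carried out in the proof of Corollary~\ref{cor1.5}.
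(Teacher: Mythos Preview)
Your proposal is correct and is essentially the paper's own argument: both rest on the torus-action computation from the proof of Corollary~\ref{cor1.5} showing that the fixed locus is finite, and then read off the two equations from the fact that $\widetilde{H}(C_{m,d}(\P^n_K))$ is a constant (you quote this via Corollary~\ref{cor8}, the paper via the discussion after Corollary~\ref{Cor3.2}). Your identification of the constant with $\chi$ by applying Theorem~\ref{Th1.4} to $\lambda=\chi$ is a clean way to close the second equation; the paper instead appeals directly to the zero-dimensionality of the fixed set and Equation~(\ref{eq002}).
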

\begin{proof}
Instead of proving that  there is a $\G_m$-action on $C_{p,d}(\P^n_K)$ such that the fixed point set consists of isolated points,
 we show that there is a ($\G_m)^{\times n}$-action on $C_{p,d}(\P^n_K)$ such that whose fixed point set is finite.
This is shown in the proof of Corollary \ref{cor1.5} by constructing a  sequence of $\G_m$-action on $C_{p,d}(\P^n_K)$
and its fixed points set. The initial idea for such a construction over the complex number field is from Lawson and Yau
(cf. \cite{Lawson-Yau}). Now the equation follows from Equation (\ref{eq03}).
\end{proof}

By applying Corollary \ref{Cor3.4} to the Chow variety $C_{m,d}(\P^n_K)$, we get the following result.

\begin{corollary}\label{Cor3.5}
For integers $n\geq m\geq 0$, $d\geq 0$ and $p,q>0$, the  virtual Hodge $(p,0)$-number and $(0,q)$-number
of the Chow variety $C_{m,d}(\P^n_K)$ vanish.
\end{corollary}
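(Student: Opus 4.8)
The plan is to mimic the proof of Corollary \ref{Cor3.3}, but now using Corollary \ref{Cor3.4} in place of Corollary \ref{Cor3.2}. The key point established in the proof of Corollary \ref{cor1.5} is that $C_{m,d}(\P^n_K)$ carries a $(\G_m)^{\times n}$-action, obtained by iterating the coordinate-scaling $\G_m$-actions, whose fixed point set is a finite set of points (a zero-dimensional $K$-scheme). However, the $\G_m$-actions we have are \emph{multiplicative}, not additive, so Corollary \ref{Cor3.4} does not apply verbatim to the full torus. What I would do instead is isolate a single $\G_a$-action. Concretely, any nontrivial $\G_m$-action on a projective variety $X$ that has only isolated fixed points forces $X$ to be rational/cellular-like in the relevant sense; more directly, I would use that on $C_{m,d}(\P^n_K)$ one can produce a $\G_a$-action by choosing a one-parameter unipotent subgroup of $\mathrm{PGL}_{n+1}$ acting on $\P^n_K$ and passing to the induced action on cycles, then analyzing its fixed locus.

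First I would set up the $\G_a$-action: let $\G_a$ act on $\P^n_K$ by $\Phi_s([z_0:\cdots:z_n]) = [z_0 : z_1 : \cdots : z_{n-1} : z_n + s z_{n-1}]$ (a single Jordan block), which induces an action on $C_{m,d}(\P^n_K)$. Second, I would identify the fixed point set: an effective $m$-cycle is fixed under this $\G_a$-action iff each component is a $\G_a$-invariant subvariety, and such subvarieties are supported in the hyperplane $\{z_{n-1} = 0\}$ together with cone-type configurations over the fixed point $[0:\cdots:0:1]$ — so the fixed locus $C_{m,d}(\P^n_K)^{\G_a}$ is again a finite union of products of lower Chow varieties, and in particular is projective of \emph{lower dimension} than $C_{m,d}(\P^n_K)$. (In fact, by iterating and combining with the torus action, one can arrange the iterated fixed locus to be zero-dimensional.) Third, Corollary \ref{Cor3.4} then gives
$$
\overline{H}_{C_{m,d}(\P^n_K)}(u,v) = \overline{H}_{C_{m,d}(\P^n_K)^{\G_a}}(u,v) \in \Z[u,v]/\langle uv\rangle,
$$
and since the right-hand side involves only a Chow variety of smaller dimension, I would induct on $\dim C_{m,d}(\P^n_K)$, the base case being a point where $\overline{H} = 1$. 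The induction shows $\overline{H}_{C_{m,d}(\P^n_K)}(u,v)$ is a constant (a nonnegative integer), hence has no $u^p$ term for $p>0$ and no $v^q$ term for $q>0$; translating back via the discussion following Corollary \ref{Cor3.4}, this says exactly $\tilde h^{p,0}(C_{m,d}(\P^n_K)) = \tilde h^{0,q}(C_{m,d}(\P^n_K)) = 0$ for all $p,q>0$.

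The main obstacle I expect is verifying that the chosen $\G_a$-action actually has a fixed locus of strictly smaller dimension (so that the induction is well-founded), and more delicately, that one can organize the family of $\G_a$-actions so that the relevant iterated fixed loci descend all the way to dimension zero — the computation of $C_{m,d}(\P^n_K)^{\G_a}$ as an explicit union of products of smaller Chow varieties, analogous to Equation (\ref{eq02}), is the technical heart. An alternative, cleaner route that sidesteps this: since the torus $(\G_m)^{\times n}$-action already has finite fixed locus, one can filter $C_{m,d}(\P^n_K)$ by the attracting cells of a generic one-parameter subgroup (a Bia{\l}ynicki-Birula-type decomposition into affine bundles over the isolated fixed points), write $\overline{H}$ additively over the cells, and observe each cell is an affine space $\A^{k}$ with $\overline{H}_{\A^k}(u,v) = (uv)^k \equiv 0$ if $k>0$ and $=1$ if $k=0$ in $\Z[u,v]/\langle uv\rangle$; summing gives $\overline{H}_{C_{m,d}(\P^n_K)} = \#\{\text{fixed points}\}$, a constant, which again yields the vanishing. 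Either way the conclusion follows, and I would present whichever of the two the paper's earlier setup makes most immediate.
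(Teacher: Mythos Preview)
Your first approach is workable and close in spirit to what the paper does, but the paper takes a much shorter route. Instead of building a single explicit $\G_a$-action and inducting on dimension, the paper invokes a result of Horrocks (\cite[\S7]{Horrocks}): the full unitriangular subgroup $J\subset \mathrm{PGL}_{n+1}$ acts on $C_{m,d}(\P^n_K)$ with \emph{exactly one} fixed point. Since $J$ is unipotent, it is a successive extension of copies of $\G_a$, so iterating Corollary~\ref{Cor3.4} (equivalently Theorem~\ref{Th2.2} with $A=\G_a$) along a composition series of $J$ gives $\overline{H}_{C_{m,d}(\P^n_K)}(u,v)=\overline{H}_{\mathrm{pt}}(u,v)=1$ in one stroke. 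No explicit description of intermediate fixed loci and no induction on dimension is needed. Your hands-on version would reproduce this in the end (indeed the $\G_a$-invariant irreducible subvarieties for your Jordan-block action are exactly those contained in the hyperplane $\{z_{n-1}=0\}$ together with cones over $Q=[0:\cdots:0:1]$, giving a decomposition analogous to Equation~(\ref{eq02})), but the Horrocks citation makes all of that unnecessary.

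Your alternative route via a Bia{\l}ynicki-Birula cell decomposition has a real gap: the decomposition into \emph{affine spaces} over the fixed points requires smoothness, and Chow varieties are in general singular. For a singular projective variety with a $\G_m$-action having isolated fixed points, the attracting sets need not be isomorphic to affine spaces, so the identity $\overline{H}_{\text{cell}}=(uv)^k$ is not available and the argument breaks down. Stick with the $\G_a$-action approach; the cleanest packaging is exactly the Horrocks input the paper uses.
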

\begin{proof}
Note that  $C_{m,d}(\P^n_K)$ admits an action of a unitriangular group $J$ with exact one fixed point (cf. \cite[\S7]{Horrocks}).
 The general fact is that the group $J$ contains a subgroup isomorphic to $\G_a$. Therefore,
 $\overline{H}_{C_{m,d}(\P^n_K)}(u,v)=1\in \Z[u,v]/\langle uv\rangle$. This implies
that $${H}_{C_{m,d}(\P^n_K)}(u,v)=1+uvg(u,v)\in \Z[u,v],$$
where $g(u,v)\in \Z[u,v]$. Since $\tilde{h}^{p,0}_{C_{m,d}(\P^n_K)}$ is the coefficient of $u^p$ in the Hodge polynomial
 ${H}_{C_{m,d}(\P^n_K)}(u,v)$, we obtain $\tilde{h}^{p,0}_{C_{m,d}(\P^n_K)}=0$ from the above explicit  formula for
${H}_{C_{m,d}(\P^n_K)}(u,v)$. By the same reason, $\tilde{h}^{0,q}_{C_{m,d}(\P^n_K)}=0$.
\end{proof}

Note that for an algebraic variety over $k$,  if we set $\tilde{\beta}^i(X):=\sum_{p+q=i}\tilde{h}^{p,q}(X)$, then we get
the virtual Poincar\'{e} polynomial $\widetilde{P}_X(t)=\sum_i \tilde{\beta}^i(X)t^i$ (cf. \cite[p.92]{Fulton2}) and
$\tilde{\beta}^i(X)$ is called the  $i$th \emph{virtual Betti number} of $X$.

From Corollary \ref{Cor3.5}, we get $\tilde{\beta}^1(C_{m,d}(\P^n_K))=0$.  It can not be obtained by the vanishing of
the usual $\rm 1st$ Betti number of $C_{m,d}(\P^n_{\C})$.

From Corollary \ref{Cor3.3}, we have
\begin{equation}\label{eq03}
 \sum_{i\geq 1} \tilde{\beta}^{2i-1}(C_{m,d}(\P^n_K))=0
\end{equation}
and
\begin{equation*}
\sum_{i\geq 0} \tilde{\beta}^{2i}(C_{m,d}(\P^n_K))=\chi(C_{m,d}(\P^n_K)).
\end{equation*}
In particular, if one could verify that $\tilde{\beta}^{2i-1}(C_{m,d}(\P^n_K))$ are nonnegative for all $i$, then Equation (\ref{eq03})
would imply that the vanishing of all odd virtual Betti numbers.

\begin{remark} Note that the usual $\rm 1st$ Betti number of $C_{m,d}(\P^n_{\C})$ is zero. This is implied by the fact that
$C_{m,d}(\P^n_{\C})$ is simply connected (cf. \cite{Horrocks} or \cite[Lemma 2.6]{Lawson1}).  However, The simply connectedness
of an algebraically closed set $X$ does \textbf{not} imply the vanishing of  the $\rm 1st$ Betti number of $X$. For example,
let $X=C(E)\cup \P^2_{\C}$, where $C(E)$ is a projective cone of a smooth plane cubic $E$ in the
plane $\P^2_{\C}$ and $E=C(E)\cap \P^2_{\C}$. The fact that $\pi_1(X)=0$ follows from a direct application of Van Kampen theorem.
 An elementary calculation yields $\widetilde{H}_X(u,v)=1+u+v+uv-u^2v-uv^2+2u^2v^2$ and so $\tilde{\beta}^1(X)=2\neq 0$.

\end{remark}

The next result, as an application of the above theorem,  we  count  points
of $C_{p,d}(\P^{n})(\F_{q^m})$ modulo $(q-1)$. Recall that the proof of Theorem 2
in \cite{Bialynicki-Birula2} does not require $K$ to be an algebraically closed field. When $K=\F_q$,
the map  $N_m: X\to |X(\F_{q^m})|$ gives rise to an  additive invariant $N_m:Var_K\to \Z$.

\begin{corollary}
 Let $N_m: X\to |X(\F_{q^m})|$ be given as above. For any integer $m\geq1$, we have
$$
N_m(C_{p,d}(\P^n)_{\F_{q^m}}) \equiv\big(^{v_{p,n}+d-1}_{\quad\quad d}\big) ~\mod(q-1).
$$
\begin{proof}
 Since $N_m(\G_m)=q^m-1\equiv 0 ~\mod(q-1)$  and $N_m(\Spec \F_{q})=1$, the corollary follows from Theorem \ref{Th1.4}.
\end{proof}

\end{corollary}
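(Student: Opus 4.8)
The plan is to deduce the congruence directly from Theorem \ref{Th1.4} after replacing the target ring $\Z$ by $\Z/(q-1)\Z$, so that the image of $\G_m$ becomes zero. First I would recall, as noted above, that $N_m\colon X\mapsto |X(\F_{q^m})|$ is an additive invariant $Var_{\F_q}\to\Z$; composing it with the reduction homomorphism $\Z\to\Z/(q-1)\Z$ produces an additive invariant $\overline{N}_m\colon Var_{\F_q}\to\Z/(q-1)\Z$. Since $q\equiv 1\pmod{q-1}$ we have $N_m(\G_m)=q^m-1\equiv 0$, hence $\overline{N}_m(\G_m)=0$, and $\overline{N}_m(\Spec\F_q)=1$. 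Thus $\overline{N}_m$ meets exactly the hypotheses placed on $\lambda$ in Theorem \ref{Th1.4}.

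The one point that needs attention is that Theorem \ref{Th1.4} was stated for an algebraically closed base field, while here $K=\F_q$. Here I would invoke the fact recalled just before the statement, and due to \cite{Bialynicki-Birula2}, that the fixed point argument underlying Theorem \ref{Th2.2} and Corollary \ref{Cor2.4} is valid over an arbitrary field; equivalently, one uses the congruence $N_m(X)\equiv N_m(X^{\G_m})\pmod{q-1}$ established earlier for any $\G_m$-scheme $X$ over $\F_q$. Feeding this into the fixed-point decompositions of the Chow varieties under the $\G_m$-action on $\P^{n+1}_K$ from the proof of Corollary \ref{cor1.5} --- Equations (\ref{eq02}) and (\ref{eqn06}), which are geometric and hold verbatim over $\F_q$ --- and using multiplicativity of $N_m$ on products together with its invariance under the closed embedding $\Sigma_Q$, I recover in $\Z/(q-1)\Z$ the same recursions used there, namely
\[
\overline{N}_m(C_{p+1,d}(\P^{n+1})_{\F_q})=\sum_{i=0}^{d}\overline{N}_m(C_{p+1,i}(\P^{n})_{\F_q})\cdot\overline{N}_m(C_{p,d-i}(\P^{n})_{\F_q}),
\]
together with $\overline{N}_m(C_{0,d}(\P^{n+1})_{\F_q})=\sum_{j=0}^{d}\overline{N}_m(C_{0,d-j}(\P^{n})_{\F_q})$.

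Finally I would solve these recursions exactly as in the proof of Corollary \ref{cor1.5}: the initial condition $\overline{N}_m(C_{0,0}(\P^n)_{\F_q})=\overline{N}_m(\Spec\F_q)=1$ forces $\overline{N}_m(C_{0,d}(\P^n)_{\F_q})=\binom{n+d}{d}$, and then the product recursion (equivalently, the generating-function identity $Q_{p,n}(t)=(1-t)^{-v_{p,n}}$ combined with Pascal's rule $v_{p+1,n+1}=v_{p+1,n}+v_{p,n}$) yields $\overline{N}_m(C_{p,d}(\P^n)_{\F_q})=\binom{v_{p,n}+d-1}{d}$ in $\Z/(q-1)\Z$, which is the asserted congruence modulo $q-1$. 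The only real obstacle is the bookkeeping forced by the non--algebraically-closed base field --- ensuring that the fixed-point decompositions and the invariance $N_m(X)\equiv N_m(X^{\G_m})\pmod{q-1}$ are genuinely available over $\F_q$ --- but this was already arranged in Section 3, so no new ingredient is needed; everything else is the combinatorial recursion already carried out for the $l$-adic Euler characteristic.
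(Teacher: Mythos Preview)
Your proposal is correct and follows essentially the same route as the paper: compose $N_m$ with the reduction $\Z\to\Z/(q-1)\Z$ so that $\overline{N}_m(\G_m)=0$ and $\overline{N}_m(\Spec\F_q)=1$, then invoke Theorem~\ref{Th1.4}. The paper's proof is a single sentence doing exactly this; your version simply unpacks the one honest subtlety the paper leaves implicit, namely that Theorem~\ref{Th1.4} is stated for algebraically closed $K$ while here $K=\F_q$, and you resolve it the same way the paper intends (via the remark that the Bia{\l}ynicki-Birula argument and hence the recursions (\ref{eq02}), (\ref{eqn06}) hold over any field).
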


In particular, when $X$ is the Chow variety $C_{m,d}(\P^n_K)$ over $k$, we get
\begin{enumerate}
 \item[(1)] $$N_n(C_{m,d}(\P^n_K))\equiv 1 ~\mod (q).$$
\item[(2)]  $$N_n(C_{m,d}(\P^n_K))\equiv \big(^{v_{p,n}+d-1}_{\quad\quad d}\big) ~\mod (q-1),
 ~where ~v_{p,n}=(^{n+1}_{p+1}).$$
 \end{enumerate}
 \begin{proof}
 (1) follows from  the fact that there is a sequence of $\G_a$-actions on $C_{m,d}(\P^n_K)$ and the last one has exactly
one fixed point (cf. \cite{Horrocks}). (2) follows from the fact that there is a sequence of $\G_m$-actions on
$C_{m,d}(\P^n_K)$ while the last one has exactly $\big(^{v_{p,n}+d-1}_{\quad\quad d}\big)$ isolated fixed points
(cf. the proof of Corollary \ref{cor1.5}).
\end{proof}

For example, the number of points on $k$-point on $C_{m,d}(\P^n_K)$ for $k=\F_2$ is always odd.

\subsection{The Chow variety for the product of  projective spaces}
In this section, we deal with more general cases.
Let $X_K$ be  a projective variety over $K$ (we omit the subscript $K$ below).

Let $\mC_p(X)$ be the topological monoid of all effective $p$-cycles on $X$ and
let $\Pi_p(X)$ be the monoid $\pi_0(\mC_p(X))$ of connected
component of $\mC_p(X)$.  For $\alpha\in \Pi_p(X)$,
let $C_{\alpha}(X)$ be the space of effective algebraic cycles $c$ on $X$
which are in the same connected component $\alpha$.

Under this setting, if we consider the $\G_m$-action on $\P^{n+1}\times X$ by
$$\Phi_t([z_0,...,z_{n},z_{n+1}],x)=([z_0,...,z_{n},tz_{n+1}],x),$$
then for any $\alpha\in \Pi_{p+1}(\P^{n+1}\times X)$, the fixed point set of the
induced $\G_m$ on $C_{\alpha}(\P^{n+1}\times X)$ contains effective cycles of the form
$$
c=\sum n_kV_k+\sum m_jW_j+\sum l_iU_i,  \quad n_k, m_j,l_i\geq 0
$$ whose class in $\Pi_{p+1}(\P^{n+1}\times X)$ is $\alpha$,
where $V_k\subset \P^n\times X$
is irreducible, $W_j=\Sigma_Q W_j'$ for some irreducible variety
$W_j\subset \P^n\times X$ of $\dim W_j'=p$ and $U_k\subset X$ is irreducible of $\dim U_k=p+1$.
Therefore, we have

\begin{equation*}
C_{\alpha}(\P^{n+1}\times X)^{\G_m}=\coprod_{\alpha=\beta+\Sigma_Q\gamma+\gamma' }\{ C_{\beta}(\P^{n}\times X)\times\Sigma_Q
C_{\gamma}(\P^{n}\times X)\times C_{\gamma'}(X)\},
\end{equation*}
where $\beta\in \Pi_{p+1}(\P^{n}\times X)$, $ \gamma\in \Pi_{p}(\P^{n}\times X)$ and $\gamma'\in \Pi_{p+1}( X)$.
Hence we have
{\small
\begin{equation}\label{eqn6}
\chi(C_{\alpha}(\P^{n+1}\times X)^{\G_m},l)=\sum_{\alpha=\beta+\Sigma_Q\gamma+\gamma'}\chi (C_{\beta}(\P^{n}\times X),l)
\cdot\chi(C_{\gamma}(\P^{n}\times X),l)\cdot \chi(C_{\gamma'}(X),l).
\end{equation}
}

By Theorem \ref{Th01.2}, we have
\begin{equation*}
 \chi(C_{\alpha}(\P^{n+1}\times X),l)=\chi(C_{\alpha}(\P^{n+1}\times X)^{\G_m},l)
\end{equation*}

Therefore, by Equation (\ref{eqn6}) we have the following recursive formula
{\small
\begin{equation}\label{eqn7}
\chi(C_{\alpha}(\P^{n+1}\times X) =\sum_{\alpha=\beta+\Sigma_Q\gamma+\gamma'}\chi (C_{\beta}(\P^{n}\times X),l)
\cdot\chi(C_{\gamma}(\P^{n}\times X),l)\cdot \chi(C_{\gamma'}(X),l).
\end{equation}
}

From this we recover the Euler-Poincar\'{e} characteristic of $C_{\alpha}(\P^{n+1}\times X)$
from those of $X$. Therefore, we can obtain the Euler-Poincar\'{e} characteristic for arbitray many products of projective
spaces (cf. \cite{Hu} for a computation without group actions).
\subsection{Toric varieties}
The  result in this subsection is  a formula for  the Euler-Poincar\'{e} characteristic for the Chow variety of
general toric varieties, which is inspired by Elizondo \cite{Elizondo}. For background on toric varieties, the reader is referred
to Fulton's book
\cite{Fulton2}.

Recall that a toric variety over $K$ is an irreducible variety $X$ containing the algebraic
group $T=\G_m^{\times n}$ as a Zariski open
subset such that the action of $\G_m^{\times n}$  on itself extends to an action on $X$.

The \textbf{$p$-th Euler series of $X$} is defined by the following formal power series
$$
E_{p}(X):=\sum_{\alpha\in \Pi_p(X)}\chi(C_{\alpha}(X), l)\alpha.
$$

Since its simplicity, the proof of Theorem \ref{Th5} is given below, which is almost word by word
translated from the case over complex number field (cf. \cite[Th. 2.1]{Elizondo}).

\begin{theorem}\label{Th5}
Denote by $V_1,...,V_N$ the $p$-dimensional invariant irreducible subvarieties of $X$. Let $e_{[V_i]}$ be the
characteristic function  of the subset $\{[V_i], i=1,2,..., N\}$ of $\Pi_p(X)$, where $[V]$ denotes its class
in $\Pi_p(X)$. Then
$$
E_{p}(X)=\prod_{1\leq i\leq N}\bigg(\frac{1}{1-e_{[V_i]}}\bigg).
$$
\end{theorem}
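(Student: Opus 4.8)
The plan is to decompose $X$ into its torus orbits and to apply the $\G_m$-action of the previous sections one coordinate at a time, exactly as in the proof of Corollary \ref{cor1.5}, but now tracking the monoid component $\alpha\in\Pi_p(X)$ instead of just the degree. First I would recall the orbit decomposition $X=\coprod_{\tau\in\Delta}O_\tau$, where each $O_\tau\cong(\G_m)^{\times(n-\dim\tau)}$; the invariant irreducible $p$-dimensional subvarieties $V_1,\dots,V_N$ are precisely the orbit closures $\overline{O_\tau}$ with $\dim\tau=n-p$, which are themselves toric varieties. The key structural input is Bia{\l}ynicki-Birula's fixed-point result (Theorem \ref{Th01.2}) together with the description, already worked out in the $\P^n$ case, of how the fixed locus of a one-parameter subgroup acting on a Chow variety splits off a cone factor.

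The main step is to set up the induction on the number of rays (or on $\dim X$). For a suitable one-parameter subgroup $\G_m\hookrightarrow T$ corresponding to the last coordinate, the induced action on $C_\alpha(X)$ has fixed locus a disjoint union indexed by how a cycle distributes between the "boundary" toric subvariety $X'$ and the cone over a lower-dimensional cycle, exactly as in Equation (\ref{eq02}) and Equation (\ref{eqn06}) of the projective-space case; applying $\chi(-,l)=\chi((-)^{\G_m},l)$ and the K\"unneth formula then yields a recursion. Passing to the generating function $E_p(X)=\sum_\alpha \chi(C_\alpha(X),l)\,\alpha$ in the monoid algebra, this recursion says that $E_p(X)$ factors as $E_p(X')$ times a geometric-series factor $\tfrac{1}{1-e_{[W]}}$ for each new invariant subvariety $W$ introduced when the last ray is added. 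Iterating over all rays of $\Delta$, every $p$-dimensional invariant irreducible subvariety $V_i$ contributes exactly one factor $\tfrac{1}{1-e_{[V_i]}}$, giving the product formula; the base case is the affine toric piece, where the only relevant contribution is the trivial class, and $\chi(C_0(\mathrm{pt}),l)=1$ by the normalization $\lambda(\Spec K)=1$ for $\lambda=\chi(-,l)$.

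I would then need to check the bookkeeping in the monoid $\Pi_p(X)$: that the map $\Sigma_Q$ (taking the cone) is compatible with components and that the constraint $\alpha=\beta+\Sigma_Q\gamma+\gamma'$ in the fixed-locus decomposition matches, under the generating-function manipulation, the statement that each $e_{[V_i]}$ appears with multiplicity one. Since the argument is, as the paper says, "almost word by word" the computation of Elizondo over $\C$ with $\G_m$-actions replacing weakly holomorphic $S^1$-actions, the content is really in citing Theorem \ref{Th01.2} to justify each step over an arbitrary algebraically closed field, plus the exclusion--inclusion and K\"unneth properties of $\chi(-,l)$ recorded in \S3.

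The hard part will be making the combinatorics of the iterated fixed-point decomposition genuinely match the clean product $\prod_i \tfrac{1}{1-e_{[V_i]}}$ — i.e., verifying that when one builds $X$ up ray by ray, each invariant $p$-fold $V_i$ is "born" exactly once and never double-counted, and that the cone operation $\Sigma_Q$ indeed sends the list of $p$-dimensional invariant subvarieties of the boundary toric variety to a subset of those of $X$ bijectively onto the newly created ones. Everything else (the fixed-point identity, inclusion--exclusion, K\"unneth, homeomorphism of $\Sigma_Q$ onto its image) is either cited from the excerpt or a direct transcription of the $\P^n$ argument already given.
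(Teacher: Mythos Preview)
Your inductive, ray-by-ray scheme is a genuinely different route from what the paper does, and it is also where the difficulty lies. The paper's proof is a one-step argument: apply Theorem~\ref{Th01.2} once for each factor of $T=\G_m^{\times n}$ to conclude $\chi(C_\alpha(X),l)=\chi(C_\alpha(X)^T,l)$, and then observe that a $p$-cycle is $T$-fixed if and only if it is a nonnegative integer combination $\sum n_iV_i$ of the $T$-invariant irreducible $p$-folds. Hence $C_\alpha(X)^T$ is the finite set of such combinations with class $\alpha$, its Euler characteristic is its cardinality, and the generating function over $\Pi_p(X)$ is literally $\prod_i(1+e_{[V_i]}+e_{[V_i]}^2+\cdots)=\prod_i(1-e_{[V_i]})^{-1}$. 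No recursion, no cone operator, no intermediate geometry.

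By contrast, your plan to peel off one $\G_m$ at a time and interpret each intermediate fixed locus via a $\Sigma_Q$-type cone decomposition is modeled on the $\P^{n+1}\to\P^n$ step, and that step relied on the very special fact that $\P^{n+1}$ minus a hyperplane is a cone over that hyperplane with a single vertex. For a general toric variety there is no canonical ``last ray'' whose removal produces a smaller toric variety together with a clean cone factor; deleting a ray from a fan need not even yield a fan, and the fixed locus of a generic one-parameter subgroup on $X$ is a disjoint union of several orbit closures rather than a single boundary divisor plus a point. So the ``hard part'' you flag---matching the combinatorics of an iterated $\Sigma_Q$-decomposition to the product over all $V_i$---is not just bookkeeping; it is an obstruction, and the paper sidesteps it entirely by passing straight to the full torus fixed locus.
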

\begin{proof}
 Note first we have $\chi(C_{\alpha}(X),l)=\chi(C_{\alpha}(X)^T,l)$ by applying
Theorem \ref{Th01.2} inductively for $n$-times.  Then $E_p(X)=\prod_{1\leq i\leq N}f_i$, where $f_i(\alpha)=1$ if $\alpha=n\cdot [V_i]$
and $0$ otherwise. Note that $1=(1-e_{[V_i]})\cdot f_i$ since, by definition, $f_i(\alpha)=(1+e_{[V_i]}+e_{[V_i]}^2+\cdots)(\alpha)$
for all $\alpha\in \Pi_p(X)$.
\end{proof}

One needs to know which irreducible subvariety $V$ of a toric variety $X$ is invariant under the action of algebraic torus $T$.
This has been answered in \cite{Elizondo}, i.e., the closure of an orbit under the action $T$. Therefore, theoretically one can
obtain the Euler-Poincar\'{e} characteristic for any toric variety. One may need additional work to get an explicit formula for
$E_p(X)$ in terms of the generators of $\Pi_p$. Elizondo has illuminated how to apply Theorem \ref{Th5} to particular examples
in complex case. His methods also works for the algebraic case. Those examples include projective spaces, the product of two
projective spaces, Hirzebruch surfaces, the blowing up of a projective space a point, etc.

\medskip
Here we give a remark on the Euler-Chow series of certain projective bundles. Let $E_1$ and $E_2$ be two algebraic vector bundle
over a projective variety $X$. Let $\P(E_1)$ (resp. $\P(E_2)$) be the projectivization of $E_1$ (resp. $E_2$). Then,
in complex case, the Euler-Chow series $E_p(\P(E_1\oplus E_2))$ can be computed in terms of that of $\P(E_1)$, $\P(E_2)$ and
$\P(E_1)\times_X \P(E_2)$, where the last one is the fiber product of $\P(E_1)$ and $\P(E_2)$ over $X$ (cf. \cite{Elizondo-Lima}).
The  proof there word for word  works for the algebraic analogue, except that the fixed point formula there is replaced by Theorem \ref{Th01.2}.
As an application, one can obtain the Euler-Chow series for Grassmannians and Flag varieties over $K$.

The calculation of Euler-Poincar\'{e} characteristic for product of projective spaces, or more generally, of toric varieties
 $X$ works well for an additive invariant $\lambda:Var_K\to R$ satisfying $\lambda(\G_m)=0$ and $\lambda(\Spec K)=1$.
For such a $\lambda$, the \textbf{$p$-th  $\lambda$-series of $X$} is defined to be the following formal power series
$$
\Lambda_{p}(X):=\sum_{\alpha\in \Pi_p(X)} \lambda(C_{\alpha}(X))\alpha.
$$

The same formula in Theorem \ref{Th5} holds for $\Lambda_{p}$  on any toric variety $X$.
\begin{theorem} For a toric variety $X$ in Theorem \ref{Th5}, we have
 $$
 \Lambda_{p}(X)=\prod_{1\leq i\leq N}\bigg(\frac{1}{1-e_{[V_i]}}\bigg).
$$
\end{theorem}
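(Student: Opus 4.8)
The plan is to imitate the proof of Theorem \ref{Th5} verbatim, replacing the additive invariant $\chi(-,l)$ by the general $\lambda$ and Theorem \ref{Th01.2} by Corollary \ref{Cor2.4}. First I would observe that, since $\lambda(\G_m)=0$ and $\lambda(\Spec K)=1$, Corollary \ref{Cor2.4} applies to $\lambda$: for any projective variety $Y$ with a $\G_m$-action one has $\lambda(Y)=\lambda(Y^{\G_m})$. Applying this to the torus $T=\G_m^{\times n}$ acting on $C_{\alpha}(X)$, decomposing the $T$-action into a sequence of $n$ one-parameter subgroups and invoking the equality $n$ times, I get $\lambda(C_{\alpha}(X))=\lambda(C_{\alpha}(X)^{T})$ for every $\alpha\in\Pi_p(X)$.

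Next I would identify the $T$-fixed locus of $C_{\alpha}(X)$. A $T$-fixed cycle is a nonnegative integral combination of $T$-invariant irreducible $p$-dimensional subvarieties, and by the cited description (orbit closures) these are exactly $V_1,\dots,V_N$. Hence $C_{\alpha}(X)^{T}$ is the finite set of cycles $\sum_i n_i V_i$ whose class is $\alpha$; in particular it is a finite union of points, so $\lambda$ of it equals its cardinality (using additivity and $\lambda(\Spec K)=1$). Therefore $\Lambda_p(X)(\alpha)=\lambda(C_\alpha(X))=\#\{(n_1,\dots,n_N)\in\mathbb{Z}_{\geq0}^N:\ \sum_i n_i[V_i]=\alpha\}$, which is precisely the coefficient of $\alpha$ in the product $\prod_{1\leq i\leq N}(1+e_{[V_i]}+e_{[V_i]}^2+\cdots)$.

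Finally, I would record that in the monoid ring of $\Pi_p(X)$ one has the formal identity $(1-e_{[V_i]})\cdot(1+e_{[V_i]}+e_{[V_i]}^2+\cdots)=1$, so the above product equals $\prod_{1\leq i\leq N}\bigl(1/(1-e_{[V_i]})\bigr)$, giving the asserted formula for $\Lambda_p(X)$. The only genuine point requiring care — the "main obstacle," such as it is — is the combinatorial claim that the $T$-fixed points of $C_\alpha(X)$ are exactly the free commutative monoid on $\{[V_1],\dots,[V_N]\}$ intersected with the fiber over $\alpha$: one must check both that every $T$-invariant effective $p$-cycle is supported on invariant subvarieties (clear, since the support of a fixed cycle is a fixed closed set, hence a union of orbit closures) and that distinct nonnegative combinations give distinct cycles (clear, since the $V_i$ are distinct irreducible varieties, so a cycle determines its multiplicities). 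Everything else is the same bookkeeping as in Theorem \ref{Th5}.
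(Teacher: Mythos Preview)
Your proposal is correct and follows the same approach the paper intends: the paper does not give a separate proof of this theorem but presents it as the immediate analogue of Theorem \ref{Th5}, obtained by replacing $\chi(-,l)$ with $\lambda$ and Theorem \ref{Th01.2} with Corollary \ref{Cor2.4}. Your write-up is in fact more explicit than the paper's own proof of Theorem \ref{Th5}, in that you spell out why $\lambda(C_\alpha(X)^T)$ counts the fixed points (using $\lambda(\Spec K)=1$) and why the fixed cycles are exactly the nonnegative combinations of the $V_i$.
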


\subsection{Chow varieties parameterizing irreducible varieties}
In this subsection, we compute the $l$-adic Euler-Poincar\'{e} characteristic of the Chow varieties parameterizing irreducible
subvarieties of a given dimension and degree in projective spaces.
Let $I_{p,d}(\P^n)_K\subset C_{p,d}(\P^n)_K$ be the subset contains $p$-dimensional
subvarieties of $\P^n_K$ of degree $d$, i.e.,
$$
I_{p,d}(\P^n)_K=\{V\in C_{p,d}(\P^n)_K| \hbox{$V$ is irreducible of  $\deg V=d$ }\}.
$$

Note that  $I_{p,1}(\P^n)_K$ is the Grassmannian of $p+1$-plane in $\P^n_K$, i.e., $I_{p,1}(\P^n)_K=G(p+1,n+1)$.
For $d>1$ each $I_{p,d}(\P^n)_K$ is a finite union of  quasi-projective varieties. The following result is about
the $l$-adic Euler-Poincar\'{e} characteristic of $I_{p,d}(\P^n)_K$.

\begin{theorem}[\cite{Hu}]\label{Th4.9} For $(l,\char(K))=1$, we have
$$\chi(I_{p,d}(\P^n)_K,l)=\left\{
\begin{array}{ccc}
(^{n+1}_{p+1})& \hbox{for $d=1$,}\\
0 &\hbox{for $d> 1$.}
\end{array}
\right.
$$
\end{theorem}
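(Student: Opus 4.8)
The plan is to run the fixed‑point machinery of Theorem~\ref{Th2.2} and Corollary~\ref{Cor2.4} directly on $I_{p,d}(\P^n)_K$, for a torus action whose fixed locus can be written down explicitly, and to read off both cases at once. The relevant invariant is $\chi(-,l)=\chi_c(-,l)$: it takes values in $\Z$, is additive, and satisfies $\chi(\G_m,l)=0$ and $\chi(\Spec K,l)=1$. I will use that Theorem~\ref{Th2.2} and Corollary~\ref{Cor2.4} require only that the object acted on be a variety, not that it be projective. Since $I_{p,d}(\P^n)_K$ is in general only a finite union of quasi‑projective varieties, every application of Corollary~\ref{Cor2.4} will be made on irreducible locally closed pieces and then reassembled by additivity of $\chi(-,l)$.

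Next I would equip $\P^n_K$ with the standard torus $T=\G_m^{\times n}$ via $(t_1,\dots,t_n)\cdot[z_0:\cdots:z_n]=[z_0:t_1z_1:\cdots:t_nz_n]$. The induced $T$‑action on $C_{p,d}(\P^n)_K$ preserves $I_{p,d}(\P^n)_K$, because irreducibility, reducedness and degree are invariant under automorphisms of $\P^n$, and a point of $I_{p,d}(\P^n)_K$ is $T$‑fixed precisely when the corresponding subvariety $V\subseteq\P^n_K$ is $T$‑invariant. Since $(\P^n_K,T)$ is a toric variety with only finitely many orbits, such a $V$ is the closure of a single orbit, hence one of the coordinate linear subspaces $\P^I_K=\{z_j=0:j\notin I\}$, and each of these has degree $1$. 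Therefore $I_{p,d}(\P^n)_K^{\,T}=\emptyset$ when $d>1$, while for $d=1$ one has $I_{p,1}(\P^n)_K=G(p+1,n+1)$ and $I_{p,1}(\P^n)_K^{\,T}$ is the finite set of the $\binom{n+1}{p+1}$ coordinate $p$‑planes, so $\chi(I_{p,1}(\P^n)_K^{\,T},l)=\binom{n+1}{p+1}$.

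To finish I would prove $\chi(I_{p,d}(\P^n)_K,l)=\chi(I_{p,d}(\P^n)_K^{\,T},l)$ by iterating the rank‑one statement. Fixing a subgroup $\G_m\hookrightarrow T$, I would decompose $I_{p,d}(\P^n)_K$ into finitely many irreducible, $\G_m$‑invariant locally closed subschemes (the connected group $\G_m$ preserves each irreducible component, and intersections are peeled off by Noetherian induction), apply Corollary~\ref{Cor2.4} to each piece, and add up; since $T$ is abelian it preserves the $\G_m$‑fixed locus, so repeating over the $n$ factors of $T$ gives the claim, and combined with the previous paragraph this yields the theorem. I expect the only delicate point to be exactly this reduction — checking that Theorem~\ref{Th2.2} (in particular, the existence of a geometric quotient of an invariant dense open subset) goes through for the possibly reducible, merely quasi‑projective $I_{p,d}(\P^n)_K$, and that the $\G_m$‑invariant stratification is compatible with passing to fixed loci at each of the $n$ stages — but since Corollary~\ref{Cor2.4} is only ever invoked on irreducible quasi‑projective varieties, this is routine.

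As an independent check on the $d>1$ vanishing that avoids the torus action, I would instead stratify $C_{p,d}(\P^n)_K$ by the degrees and multiplicities of the irreducible components of a cycle, obtaining a constructible decomposition $C_{p,d}(\P^n)_K=\coprod_{\sum_e e k_e=d}\prod_{e\ge1}\mathrm{Sym}^{k_e}\!\big(I_{p,e}(\P^n)_K\big)$; applying $\chi(-,l)$, using the symmetric‑product identity $\sum_k\chi(\mathrm{Sym}^kY,l)t^k=(1-t)^{-\chi(Y,l)}$, and comparing with $\sum_d\chi(C_{p,d}(\P^n)_K,l)t^d=(1-t)^{-v_{p,n}}$ from Corollary~\ref{cor1.5} gives $\prod_{e\ge1}(1-t^e)^{-\chi(I_{p,e}(\P^n)_K,l)}=(1-t)^{-v_{p,n}}$, which forces $\chi(I_{p,1}(\P^n)_K,l)=v_{p,n}$ and $\chi(I_{p,e}(\P^n)_K,l)=0$ for all $e\ge2$ by induction on $e$.
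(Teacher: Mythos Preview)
Your main argument is essentially the paper's: the same torus $T$ acts, the $T$-invariant irreducible subvarieties of $\P^n_K$ are the coordinate linear subspaces (hence of degree~$1$), so $I_{p,d}(\P^n)_K^{\,T}=\emptyset$ for $d>1$, and the fixed-point formula finishes. The only packaging difference is that the paper passes to the projective closure $\bar I_{p,d}(\P^n)_K$, applies the fixed-point formula to both $\bar I_{p,d}$ and $\bar I_{p,d}\setminus I_{p,d}$ (which share the same $T$-fixed locus when $d>1$), and then subtracts via inclusion--exclusion, whereas you invoke Corollary~\ref{Cor2.4} directly on the quasi-projective $I_{p,d}$ after stratifying into irreducible $\G_m$-invariant pieces; your route is slightly more direct and avoids the closure, at the price of the bookkeeping you correctly flag.

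Your generating-function check is not in the paper and is a genuinely independent route. The identification of $\coprod_d C_{p,d}(\P^n)_K$ with the free abelian monoid on $\coprod_e I_{p,e}(\P^n)_K$ plus Macdonald's formula for $\chi(\mathrm{Sym}^k(-),l)$ gives $\prod_{e\ge1}(1-t^e)^{-\chi(I_{p,e}(\P^n)_K,l)}=(1-t)^{-v_{p,n}}$ by Corollary~\ref{cor1.5}, and the exponents are then forced. This avoids the torus action entirely and would transport to any $X$ whose Euler series $\sum_d\chi(C_{p,d}(X),l)\,t^d$ is already known; the cost is that it leans on the constructibility of the stratification of $C_{p,d}$ by component type and on Macdonald's formula in the $l$-adic setting, both of which are true but require their own justification.
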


\begin{proof} The proof here is similar to the case over the complex number field.
Recall that the action of the algebraic $n$-torus $T^n:=\G_m^{n+1}/\G_m$ is given by
$$
\Phi_t([z_0,z_1,...,z_n])=[t_0z_0,t_1z_1,...,t_nz_n]
$$
where $t=(t_0,...,t_n)$ and $[z_0,z_1,...,z_n]$ are homogeneous coordinate for $\P^n_K$. This action on
$\P^n$ induces an action of $T$ on $I_{p,d}(\P^n)_K$ and hence on its closure $\bar{I}_{p,d}(\P^n)_K$ in $C_{p,d}(\P^n)_K$ and
$ \bar{I}_{p,d}(\P^n)-{I}_{p,d}(\P^n)$.
By Theorem \ref{Th01.2}, we have
\begin{equation}\label{eq8}
\chi(F_{p,d}(\P^n)_K,l)=\chi(\bar{I}_{p,d}(\P^n)_K).
\end{equation}

By induction on Theorem \ref{Th01.2}, we obtain that a $p$-dimensional $T$-invariant cycles is
a linear combination of $p$-planes,
we get $F_{p,d}(\P^n)_K\subset \bar{I}_{p,d}(\P^n)_K-{I}_{p,d}(\P^n)_K$ for $d>1$,
where $F_{p,d}(\P^n)_K$ is the fixed point set of $T$-action on $\bar{I}_{p,d}(\P^n)_K$.
This together with Theorem \ref{Th01.2} implies that
\begin{equation}\label{eq9}
\chi(F_{p,d}(\P^n)_K,l)=\chi(\bar{I}_{p,d}(\P^n)_K-{I}_{p,d}(\P^n)_K,l).
\end{equation}

By the inclusion-exclusion property for $l$-adic Euler-Poincar\'{e} characteristic (cf. \cite{Laumon}), we have
$\chi(\bar{I}_{p,d}(\P^n)_K-{I}_{p,d}(\P^n)_K,l)=\chi(\bar{I}_{p,d}(\P^n)_K,l)-\chi({I}_{p,d}(\P^n)_K,l)$.
This together with Equation (\ref{eq8}) and (\ref{eq9}) implies ${I}_{p,d}(\P^n)_K,l)=0$ for $d>1$. This case that $d=1$ follows from the fact
$\chi(I_{p,1}(\P^n),l)=\chi(G(p+1,n+1),l)=(^{n+1}_{p+1})$.
\end{proof}

Similarly, we have an alternative shorter proof of  the following result.
\begin{proposition}\label{prop4.10} For $(l,\char(K))=1$, we have
$$\chi(I_{\alpha}(\P^n\times\P^m)_K,l)=\left\{
\begin{array}{ccl}
&(^{n+1}_{k+1})(^{m+1}_{l+1}),& \hbox{if $\alpha=[\P^k_K\times\P^l_K]$, where  $k+l=p$,}\\
&0 ,&\hbox{otherwise.}
\end{array}
\right.
$$
\end{proposition}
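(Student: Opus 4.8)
The plan is to mimic the proof of Theorem~\ref{Th4.9} verbatim, replacing the one-parameter torus that distinguishes the coordinate hyperplane by the natural $n+m$-dimensional torus $T=T^n\times T^m$ acting on $\P^n_K\times\P^m_K$ via
\begin{equation*}
\Phi_{(s,t)}([x_0,\dots,x_n],[y_0,\dots,y_m])=([s_0x_0,\dots,s_nx_n],[t_0y_0,\dots,t_my_m]),
\end{equation*}
which induces an action on $I_\alpha(\P^n\times\P^m)_K$, on its closure $\bar I_\alpha(\P^n\times\P^m)_K$ inside $C_\alpha(\P^n\times\P^m)_K$, and on the boundary $\bar I_\alpha-I_\alpha$. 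First I would identify, by inductively applying Bia\l ynicki-Birula's fixed point formula (Theorem~\ref{Th01.2}) with respect to the coordinate one-parameter subgroups of $T$, the $T$-fixed irreducible subvarieties: a $p$-dimensional $T$-invariant irreducible cycle must be a product $\P^k_K\times\P^l_K$ of coordinate linear subspaces with $k+l=p$. (This is the analogue of ``a $T$-invariant $p$-cycle in $\P^n$ is a sum of coordinate $p$-planes''; the product structure means each factor of an invariant irreducible subvariety is a coordinate linear subspace of the respective projective space.) Consequently, for a class $\alpha$ that is \emph{not} of the form $[\P^k_K\times\P^l_K]$, every $T$-fixed point of $\bar I_\alpha$ lies in the boundary $\bar I_\alpha-I_\alpha$, so
\begin{equation*}
\chi(F_\alpha,l)=\chi(\bar I_\alpha,l)=\chi(\bar I_\alpha-I_\alpha,l),
\end{equation*}
where $F_\alpha$ is the $T$-fixed locus, the first equality being Theorem~\ref{Th01.2} and the second being Theorem~\ref{Th01.2} applied to the $T$-variety $\bar I_\alpha - I_\alpha$. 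By the inclusion-exclusion property of the $l$-adic Euler-Poincar\'e characteristic (cf. \cite{Laumon}), $\chi(\bar I_\alpha-I_\alpha,l)=\chi(\bar I_\alpha,l)-\chi(I_\alpha,l)$, and combining the displayed equalities forces $\chi(I_\alpha(\P^n\times\P^m)_K,l)=0$.

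For the remaining case $\alpha=[\P^k_K\times\P^l_K]$ with $k+l=p$, the variety $I_\alpha(\P^n\times\P^m)_K$ is itself the Grassmannian-type parameter space $G(k+1,n+1)\times G(l+1,m+1)$ of pairs of linear subspaces (a $k$-plane in $\P^n_K$ and an $l$-plane in $\P^m_K$); by the K\"unneth formula for $l$-adic cohomology and the known value $\chi(G(k+1,n+1),l)=\binom{n+1}{k+1}$ one gets $\chi(I_\alpha,l)=\binom{n+1}{k+1}\binom{m+1}{l+1}$.

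The main obstacle I expect is the step identifying the $T$-fixed irreducible subvarieties of $\P^n_K\times\P^m_K$ and, more subtly, verifying that for $\alpha=[\P^k\times\P^l]$ the open piece $I_\alpha$ coincides with (or at least has the same Euler characteristic as) $G(k+1,n+1)\times G(l+1,m+1)$ --- one must check that an irreducible subvariety of $\P^n\times\P^m$ whose class equals that of a product of coordinate linear subspaces is in fact a product of (arbitrary, not just coordinate) linear subspaces of the correct dimensions, which uses that the two projections have the expected degrees on such a cycle. Beyond that, everything is a word-for-word transcription of the proof of Theorem~\ref{Th4.9}; no new ingredient is needed, since Bia\l ynicki-Birula's theorem and the inclusion-exclusion and K\"unneth properties hold over an arbitrary algebraically closed $K$ with $(l,\char K)=1$.
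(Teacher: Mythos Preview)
Your proposal is correct and follows essentially the same route as the paper's own proof: the same torus $T=T^n\times T^m$ acting diagonally, the same identification of $T$-invariant $p$-cycles as sums of products of coordinate linear subspaces, the same two applications of Bia{\l}ynicki-Birula's theorem to $\bar I_\alpha$ and to $\bar I_\alpha-I_\alpha$, and the same inclusion--exclusion step. You are in fact more careful than the paper in flagging that the identification $I_{[\P^k\times\P^l]}\cong G(k+1,n+1)\times G(l+1,m+1)$ needs an argument; the paper simply asserts it.
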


\begin{proof}
The action of the algebraic torus $T:=T^n\times T^m$ on $\P^n_K\times\P^m_K$ is given as the product of the actions on each
factor defined in the above proof of Theorem \ref{Th4.9}.
For each $\alpha\in \Ch_{p}(\P^n_K\times\P^m_K)$, where $\Ch_p(X)$ denotes the Chow group of $p$-cycles on $X$,
 the action of $T$ on $\P^n_K\times\P^m_K$ induces an action on $C_{\alpha}(\P^n\times\P^m)_K$ and
$I_{\alpha}(\P^n\times\P^m)_K$ since the rational equivalent class of an irreducible variety is preserved by this action.
Since the action is algebraic, it extends to the closure  $\bar{I}_{\alpha}(\P^n\times\P^m)_K$
of $I_{\alpha}(\P^n\times\P^m)_K$ in $C_{\alpha}(\P^n\times\P^m)_K$.

By Theorem \ref{Th01.2}, we have
$$\chi(F_{\alpha}(\P^n\times\P^m)_K,l)=\chi(\bar{I}_{\alpha}(\P^n\times\P^m)_K,l),$$
where $F_{\alpha}(\P^n\times\P^m)_K$ the fixed point set of this action in $\bar{I}_{\alpha}(\P^n\times\P^m)_K$.

The $T$-invariant cycles in $\alpha$ are exactly finite sum of products of $k$-planes in $\P^n_K$ and $(p-k)$-planes
in $\P^m_K$, where $0\leq k\leq p$. Hence if $\alpha\neq e_{k,l}$ for all $k+l=p,k,l\geq 0$, then
$F_{\alpha}(\P^n\times\P^m)_K\subset \bar{I}_{\alpha}(\P^n\times\P^m))_K-{I}_{\alpha}(\P^n\times\P^m)_K)$. Applying Theorem \ref{Th01.2} to
$\bar{I}_{\alpha}(\P^n\times\P^m))_K-{I}_{\alpha}(\P^n\times\P^m)_K)$, we have
$$
\chi(F_{\alpha}(\P^n\times\P^m)_K,l)=\chi(\bar{I}_{\alpha}(\P^n\times\P^m)_K-{I}_{\alpha}(\P^n\times\P^m)_K,l).
$$

These two equations together the inclusion-exclusion property
for $l$-adic Euler-Poincar\'{e} characteristic  imply that
$$
\chi({I}_{\alpha}(\P^n\times\P^m)_K,l)=0.
$$

If $\alpha=e_{k,l}$ for some $k,l\geq 0, k+l=p$, then $I_{\alpha}(\P^n\times \P^m)_K=G(k+1,n+1)\times G(l+1,m+1)$ and so
$\chi(I_{\alpha}(\P^n\times \P^m)_K,l)=(^{n+1}_{k+1})(^{m+1}_{l+1})$.

The completes the proof of Proposition \ref{prop4.10}.
\end{proof}

From the proof of Theorem \ref{Th4.9} and Proposition \ref{prop4.10}, we observe that it works nicely for general additive invariants
$\lambda:Var_K\to R$ satisfying $\lambda(\G_m)=0$ and $\lambda(\Spec K)=1$.
That is,  the following statement holds.

\begin{proposition}\label{prop12} For  additive invariants  $\lambda:Var_K\to R$
satisfying $\lambda(\G_m)=0$ and $\lambda(\Spec K)=1$,
we have
$$\lambda(I_{p,d}(\P^n)_K)=\left\{
\begin{array}{ccc}
(^{n+1}_{p+1})& \hbox{for $d=1$,}\\
0 &\hbox{for $d> 1$.}
\end{array}
\right.
$$
and
$$ \lambda(I_{\alpha}(\P^n\times\P^m)_K)=\left\{
\begin{array}{ccl}
&(^{n+1}_{k+1})(^{m+1}_{l+1}),& \hbox{if $\alpha=[\P^k_K\times\P^l_K]$, where  $k+l=p$,}\\
&0 ,&\hbox{otherwise.}
\end{array}
\right.
$$
\end{proposition}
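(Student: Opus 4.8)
The plan is to rerun the proofs of Theorem~\ref{Th4.9} and Proposition~\ref{prop4.10} essentially verbatim, making two substitutions: replace $\chi(-,l)$ by the given additive invariant $\lambda$, and replace Bia\l ynicki-Birula's fixed point formula (Theorem~\ref{Th01.2}) by the statement already used in the proof of Theorem~\ref{Th1.4}, namely that $\lambda(Y)=\lambda(Y^{\G_m})$ for any projective algebraic set $Y$ with a $\G_m$-action, which follows from $\lambda(\G_m)=0$ together with Theorem~\ref{Th2.2}. The two structural properties those proofs use for $\chi(-,l)$ --- the inclusion-exclusion relation and the product formula --- are built into the defining axioms of $\lambda$, so they need no separate argument; and the geometric inputs --- that a torus-invariant prime cycle in $\P^n_K$ is a coordinate $p$-plane (its ideal is $T$-homogeneous, hence a monomial prime ideal, hence generated by a subset of the variables), and that an irreducible variety of class $[\P^k_K\times\P^l_K]$ is a product of a $k$-plane and an $l$-plane --- are characteristic-free and unchanged. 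So the only genuinely new input is the value of $\lambda$ on Grassmannians.

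For that, observe first that $\lambda(\A^1_K)=\lambda(\Spec K)+\lambda(\G_m)=1$, hence $\lambda(\A^r_K)=1$ for every $r$, and therefore by additivity any cellular variety with $N$ cells has $\lambda$-value $N$. Since $G(a,b)$ has a Schubert cell decomposition into $\binom{b}{a}$ affine cells, $\lambda(G(a,b))=\binom{b}{a}$. This disposes of the case $d=1$, where $I_{p,1}(\P^n)_K=G(p+1,n+1)$, and --- using multiplicativity of $\lambda$ together with the identification $I_{[\P^k_K\times\P^l_K]}(\P^n\times\P^m)_K=G(k+1,n+1)\times G(l+1,m+1)$ from the proof of Proposition~\ref{prop4.10} --- also the case $\alpha=[\P^k_K\times\P^l_K]$.

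For $d>1$ (respectively, for $\alpha$ not of the form $[\P^k_K\times\P^l_K]$), let $T$ be the torus acting on $\P^n_K$ (respectively on $\P^n_K\times\P^m_K$) as in the proof of Theorem~\ref{Th4.9}; it induces an action on the Chow variety, on the closure $\bar{I}_{p,d}(\P^n)_K$ of $I_{p,d}(\P^n)_K$, and on the complement $\bar{I}_{p,d}(\P^n)_K-I_{p,d}(\P^n)_K$. Note that $I_{p,d}(\P^n)_K$ is open in the Chow variety, its complement being the closed locus of non-prime cycles, i.e.\ the finite union of the images of the addition morphisms $C_{p,d_1}(\P^n)_K\times C_{p,d_2}(\P^n)_K\to C_{p,d}(\P^n)_K$ with $d_1+d_2=d$, $d_i\geq 1$; hence $\bar{I}_{p,d}(\P^n)_K-I_{p,d}(\P^n)_K$ is a projective algebraic set carrying the $T$-action. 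Applying the $\G_m$-version of the fixed point statement $n$ times, exactly as in the proof of Theorem~\ref{Th5}, gives $\lambda(\bar{I}_{p,d})=\lambda(\bar{I}_{p,d}^{T})$ and $\lambda(\bar{I}_{p,d}-I_{p,d})=\lambda((\bar{I}_{p,d}-I_{p,d})^{T})$. Since every $T$-fixed cycle of degree $d>1$ is a non-negative integral combination of coordinate $p$-planes of total degree $d\geq 2$, hence either reducible or non-reduced and in any case not prime, we get $\bar{I}_{p,d}^{T}\subseteq\bar{I}_{p,d}-I_{p,d}$, and so $\bar{I}_{p,d}^{T}=(\bar{I}_{p,d}-I_{p,d})^{T}$. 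Combining this with the additivity relation $\lambda(\bar{I}_{p,d})=\lambda(I_{p,d})+\lambda(\bar{I}_{p,d}-I_{p,d})$ forces $\lambda(I_{p,d})=0$. The argument for $I_\alpha(\P^n\times\P^m)_K$ is word for word the same.

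The step I expect to be the main obstacle is a purely bookkeeping one: Theorem~\ref{Th2.2} and Corollary~\ref{Cor2.4} are phrased for a single $\G_m$ acting on a reduced and irreducible scheme, whereas above the conclusion is applied to the possibly reducible projective schemes $\bar{I}_{p,d}$ and $\bar{I}_{p,d}-I_{p,d}$ under a torus action. This is handled exactly as in the proof of Theorem~\ref{Th1.4}: a connected group preserves each irreducible component, so one runs an inclusion-exclusion over the components and their intersections to reduce to the irreducible case, and then iterates over the $\G_m$-factors of $T$, noting that the $T$-fixed locus is the successive fixed locus of the factors and that each intermediate fixed locus is again a projective algebraic set carrying an action of the remaining subtorus. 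No other point requires more than a routine check.
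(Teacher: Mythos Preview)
Your proposal is correct and follows essentially the same approach as the paper, which simply remarks that the proofs of Theorem~\ref{Th4.9} and Proposition~\ref{prop4.10} go through unchanged once $\chi(-,l)$ is replaced by $\lambda$ and Theorem~\ref{Th01.2} by Corollary~\ref{Cor2.4}. Your added justifications---the computation of $\lambda(G(a,b))$ via the Schubert cell decomposition (cf.\ Example~2.6) and the bookkeeping reduction from reducible projective sets to irreducible ones via inclusion--exclusion over components---fill in details the paper leaves implicit but do not deviate from its strategy.
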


This proposition has the following immediate corollary.
\begin{corollary}
The Hodge polynomial $H(I_{p,d}(\P^n)_K)\in \Z[u,v]$ (resp. $H(I_{\alpha}(\P^n\times\P^m)_K)$) is in the ideal
 $\langle uv-1\rangle$
generated by $uv-1$ for $d>1$(resp. $\alpha\neq [\P^k_K\times\P^l_K]$).
\end{corollary}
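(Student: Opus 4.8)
The plan is to deduce the corollary directly from Proposition \ref{prop12} together with the basic relationship between the Hodge polynomial $H$ and its reduction $\widetilde{H}$ modulo $\langle uv-1\rangle$. Recall that $\widetilde{H}$ is by definition the composite of $H\colon Var_K\to\Z[u,v]$ with the quotient map $\pi\colon\Z[u,v]\to\Z[u,v]/\langle uv-1\rangle$, and that $\widetilde{H}$ is an additive invariant satisfying $\widetilde{H}(\G_m)=0$ and $\widetilde{H}(\Spec K)=1$. Thus $\widetilde{H}$ is precisely an instance of the invariants $\lambda$ covered by Proposition \ref{prop12}.

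First I would apply Proposition \ref{prop12} with $\lambda=\widetilde{H}$. For $d>1$ this gives $\widetilde{H}(I_{p,d}(\P^n)_K)=0$ in $\Z[u,v]/\langle uv-1\rangle$; equivalently, $\pi\big(H(I_{p,d}(\P^n)_K)\big)=0$, which is exactly the assertion that $H(I_{p,d}(\P^n)_K)$ lies in the kernel of $\pi$, namely the ideal $\langle uv-1\rangle$. The same argument applied to $I_{\alpha}(\P^n\times\P^m)_K$ for $\alpha\neq[\P^k_K\times\P^l_K]$ yields $\pi\big(H(I_{\alpha}(\P^n\times\P^m)_K)\big)=0$, i.e.\ $H(I_{\alpha}(\P^n\times\P^m)_K)\in\langle uv-1\rangle$. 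That is the whole proof.

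There is essentially no obstacle here: the only point that needs a sentence of justification is that $\widetilde{H}$ genuinely satisfies the hypotheses of Proposition \ref{prop12} — that $\widetilde{H}(\G_m)=0$ (because $H(\G_m)=uv-1\in\langle uv-1\rangle$) and $\widetilde{H}(\Spec K)=1$ — and that the kernel of $\pi$ is exactly $\langle uv-1\rangle$, which is its definition. One should also note, for the record, that Proposition \ref{prop12} is stated for invariants valued in an arbitrary ring $R$, so taking $R=\Z[u,v]/\langle uv-1\rangle$ is legitimate. The mild care to take is purely bookkeeping: keeping straight that "$H$ lies in the ideal $\langle uv-1\rangle\subset\Z[u,v]$" and "$\widetilde H=0$ in the quotient ring" are the same statement, so that nothing beyond Proposition \ref{prop12} is actually required.

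A complete write-up would read roughly: \emph{By construction $\widetilde{H}$ is an additive invariant with values in $\Z[u,v]/\langle uv-1\rangle$ satisfying $\widetilde{H}(\Spec K)=1$ and $\widetilde{H}(\G_m)=0$, so Proposition \ref{prop12} applies to $\lambda=\widetilde{H}$. Hence $\widetilde{H}(I_{p,d}(\P^n)_K)=0$ for $d>1$ and $\widetilde{H}(I_{\alpha}(\P^n\times\P^m)_K)=0$ for $\alpha\neq[\P^k_K\times\P^l_K]$. Since $\widetilde{H}$ is the reduction of $H$ modulo $\langle uv-1\rangle$, this says precisely that the polynomials $H(I_{p,d}(\P^n)_K)$ and $H(I_{\alpha}(\P^n\times\P^m)_K)$ lie in the ideal $\langle uv-1\rangle$, as claimed.}
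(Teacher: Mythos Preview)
Your argument is correct and is exactly the approach the paper intends: the corollary is stated as an immediate consequence of Proposition \ref{prop12}, and you have spelled out precisely why, namely by specializing $\lambda=\widetilde{H}$ and unwinding that $\widetilde{H}=0$ means $H\in\langle uv-1\rangle$.
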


\end{document}